\documentclass[a4paper]{article}
\usepackage[a4paper,margin=3cm]{geometry}
\usepackage[utf8]{inputenc}
\usepackage{amsthm,amssymb,amsbsy,amsmath,amsfonts,amssymb,amscd,dsfont}
\usepackage{stmaryrd}
\usepackage{amsmath}
\usepackage{amssymb}
\usepackage{xcolor}
\usepackage{graphicx}
\usepackage{subcaption}
\usepackage{authblk}

\usepackage{subfiles}
\usepackage{csquotes} 
\usepackage[todonotes={textsize=scriptsize}]{changes}
\usepackage[colorlinks=true,linkcolor=blue, citecolor=red]{hyperref}
\usepackage{color}

\usepackage{microtype}
\usepackage{hyperref}

\newtheorem{theorem}{Theorem}[section]
\newtheorem{lemma}[theorem]{Lemma}
\newtheorem{proposition}[theorem]{Proposition}

\theoremstyle{definition}

\newtheorem{assumpt}{Assumption}

\theoremstyle{remark}

\newcommand{\eps}{\varepsilon}
\newcommand{\R}{\mathbb{R}} 
\newcommand\E{\mathbb E}
\renewcommand\r{\mathbb{R}}

\newcommand\pro[1]{\mathbb{P}\left(#1\right)}
\newcommand\esp[1]{\mathbb{E}\left[#1\right]}
\newcommand\uno[1]{\mathds{1}_{\left\{#1\right\}}}

\newcommand\tx{\rho}

\title{Invasion dynamics with vanishing fitness for a quasi-critical birth-death process}

\author[1]{Vincent Bansaye}
\author[2]{Nadia Belmabrouk}
\author[3]{Xavier Erny}
\author[4]{Simon Girel}

\affil[1,2]{CMAP, \'Ecole polytechnique, Institut Polytechnique de Paris, Palaiseau, France}
\affil[1]{MERGE, INRIA, \'Ecole polytechnique, Institut Polytechnique de Paris, Palaiseau, France}
\affil[3]{SAMOVAR, T\'el\'ecom SudParis, Institut Polytechnique de Paris, Palaiseau, France}
\affil[4]{Université Côte d’Azur, CNRS, LJAD, France}

\date{}

\begin{document}
\maketitle 

\vspace{0.25cm}
\begin{abstract}
We study the invasion dynamics of populations exhibiting positive density-dependent effects. We start with a single individual  and consider a  single-type birth and death process. The initial individual growth rate vanishes  but it increases with the population density, proportionally to the number of individuals divided by a scaling parameter $K$. Before reaching the macroscopic scale~$K$, the population process is almost critical. 
 We prove that the probability for the population to reach the macroscopic level $K$ decreases  as $1/\sqrt{K}$ as $K$ goes to infinity. We also describe the associated trajectories and show that invasion can be split into three time periods. First, the process needs to escape from zero, and conditioning on survival, it grows linearly until the order $\sqrt{K}$. The scaled process is approximated by a diffusion,  as for critical branching process, with an additional drift term coming from cooperation, which  breaks the branching property. Second, in intermediate scale $\sqrt{K}$, we observe another diffusion, surviving with positive probability, without conditioning. Finally, beyond $\sqrt{K}$ scale,  the  process can be approximated by a classical macroscopic ODE limit. 
The proof of the first phase involves change of probability and characterization of uniform integrability of martingales, while the two other phases rely on uniform approximations on polynomial time scales.
\end{abstract}
\textbf{Keywords:} Invasion dynamics, Birth-death processes, Scaling limit, Diffusion approximation, Dynamical system.
\section{Introduction}

We are interested in the invasion of  populations which enjoy positive density-dependent effect. It corresponds to  cooperation  coming for instance from  mutualism, Allee effect,  or positive loops. 
Let us define \emph{fitness} here as the individual growth rate in small population, i.e. the gap between the birth and death rates for one single individual in a fixed environment.

When this fitness is positive, starting from one individual (or small populations), the process is usually approximated by a supercritical branching process and  survives with a (non vanishing) positive  probability.  We refer to 
\cite{ball1995strong, bansaye2024sharp, barbour2015escape, barbour2013approximating, champagnat2006microscopic, champagnat2011polymorphic} and references therein for such results motivated in particular by epidemics and invasion-fixation in evolution.
When it survives, the population density is approximated by a deterministic ODE, using classical fluid limit theorems \cite{ethier2009markov}. This deterministic approximation actually works as soon as the population becomes large. This includes intermediate scales, when the order of magnitude of the density of the invasive population is between  $1$ and $K$, see \cite{bansaye2024sharp}. When the fitness is negative, the probability to reach macroscopic values is very low, decreasing exponentially with $K$.

In this paper, we are interested in critical regimes, i.e.  vanishing fitness for invaders. Our original motivation comes from invasion of cancer cells, where the cancer-associated mutation might leave unchanged the intrinsic fitness of each isolated cancer cells, that  remains the same as in the resident population. Instead, it induces positive feedback loops among cancer cells, resulting in a density-dependent selective advantage, that vanishes at low cell density.  We focus in this paper on the mutant population and consider 
a toy model, simplifying the study of the invasion. We  can understand and quantify its capacity to invade. We hope that this study, which is of independent interest, give some keys for more complex models of invasion of mutants in competition with a resident population.

More precisely,  letting $K\geq 1$ be the scaling parameter, we consider  a  birth-death process~$N^K$ starting from $N_0^K=1$ a.s. 
The transition rates of this Markov process are given for $n\in \mathbb{N}$ by
\begin{align*}
 & n\to n+1 \text{ with rate } n\,b\left(\frac{n}{K}\right),\\
 &   n\to n-1  \text{ with rate } n\,d\left(\frac{n}{K}\right),
\end{align*}
where $b$ and $d$ are the individual density-dependent birth and death  rates, respectively. We are interested in the case where $b$ and $d$ are very close when the population density is low. 
The typical examples correspond to cooperation impacting the birth rates, where 
$$ b(x)=a\,(x\wedge c) +\tx, \qquad d(x)=\tx,$$
with $a,\tx,c\geq 0$, and to cooperation making the death rate decrease:
$$b(x)=\tx, \quad d(x) = (\tx - a\,x)_+.$$
In these examples, 
when the cooperation coefficient $a=0$, $b(x)=d(x)=\tx$. This yields the classical critical linear birth and death process \cite{athreya2012branching}. It enjoys the branching property and the generating function of the marginal laws are explicit. Assuming that the second moment is finite, the survival probability at time $t$ is equivalent to $1/t$,  up to a multiplicative constant. 
Moreover,  conditionally on survival at time $t$, the process grows linearly during the time window $[0,t]$ and $Z_t/t$ converges to an exponential law. Renormalizing by $t$ and conditioning on survival, the process 
is approximated by the critical Feller diffusion, under a suitable change of probability. We refer to the seminal works
\cite{heathcote1967refinement, hering1977minimal, seneta1966quasi} for details. 
In this framework, reaching macroscopic population sizes $K$ occurs  with probability of order $1/K$. Then it happens at time of order $K$, and the renormalized process is given by a  diffusion. The classical approach for estimating survival probability relies on generating functions and the approximation of  trajectories conditioned on survival is obtained by $h$- transform.  Our approach  leads to an alternative proof for the survival probability, relying on the change of probability associated with the $h$-transform.\\

In this work,  we consider  processes with cooperation, i.e.  $a>0$. They do not enjoy the branching property any longer. Our framework is actually a bit  more general that the the previous examples. It includes any birth and death functions $b$ and $d$ of class $C^2$ such that $b(0)=d(0)=\tx$ and thus recovering classical model for null fitness. 
 More precisely, in the whole paper, we assume 
 \begin{assumpt}\label{assu}
 The functions $b$ and $d$ are locally Lipschitz on $[0,\infty)$ and
 $b-d$ is upperbounded,  i.e. there exists $C>0$ such that for any $x\geq 0$, $b(x)\leq d(x)+C$.
 Moreover there exist $a,\rho,\alpha>0$ such that
 $$\sup_{x\in (0,1]}  \frac{\vert b(x)-d(x)-ax \vert}{x^{1+{\alpha}}}<\infty,\qquad 
 \sup_{x\in (0,1]}  \frac{\vert b(x)+d(x)-2\rho \vert}{x}<\infty.
 $$
\end{assumpt}
The local Lipschitz assumption and the sublinearity of the growth rate $x(b(x)-d(x))$ guarantee non explosion of the birth and death process and allow us to work with a process defined for any time.  They also ensure the existence and uniqueness of the associated ODE, for any time. It could be relaxed by a localization argument but will be enough for our purpose. \\

The cooperation $a>0$ changes both the order of magnitude of the invasion probability and trajectories associated with. We prove that the probability of reaching macroscopic levels is now of order $1/\sqrt{K}$, instead of  $1/K$. Moreover, the process describing invasion exhibits now three phases, corresponding to  different approximations. The first phase requires conditioning on survival, similarly to the critical branching process conditioned on survival, with an additional drift term in the diffusion approximation coming from cooperation. A new intermediate diffusive regime appears, without conditioning, when the process is at scale $\sqrt{K}$. During this phase, the survival probability is positive and less than one. The last phase is the classical deterministic approximation, with probability one for survival. We refer to Lemma~\ref{lem:extinction} for a preliminary result showing the change of survival probability in these different phases. \\

Our main difficulty in this paper is to describe the first phase. Indeed, we need to estimate the survival probability and describe the trajectory, without enjoying the branching property. It is achieved using a change of probability. Such an approach for conditioning a process to survive is classical, but the form of the martingale leads to delicate estimations for proving convergence as $K$ goes to infinity, see in particular exponential estimate in Lemma \ref{estmomexpo}. The process in the first phase is close to a critical branching process. Our issue is thus related to the study of almost or quasi critical branching process, which has already attracted lots of attention for several reasons. For instance, at the population level, fluctuations of large populations around equilibrium are linked to almost critical process, see for instance 
\cite{kurtz1981approximation}. 
The case of processes whose drift becomes asymptotically critical has also attracted a lot of attention, motivated for instance by competitive effect which make the growth rate decline to zero, see e.g. \cite{aspandiiarov1996passage,kersting2017recurrence, klebaner1989linear, sagitov2015skeletons}. Techniques involve Lyapunov functions and change of probability and we refer to 
 \cite{denisov2025markov}
 for an overview on the topics, and to references therein. 
 We also rely on change of probability but the situation we consider here is somewhat different. Indeed, we leave the critical area here, instead of becoming more and more critical. Moreover, the limit we consider is linked to an additional scaling parameter $K$, fixed along time, rather than a long time asymptotic. We prove that the survival probability is of order $1/\sqrt{K}$ and approximate the conditional trajectory by a diffusion, see Theorem \ref{thm:1}. 

The second phase consists of describing the population process $N^K$ when it is of order of magnitude  $\sqrt{K}$. At this stage, it survives with positive probability and no conditioning is required for invasion. On the accelerated time $\sqrt{K}$, the trajectory is approximated by a new diffusion, see Theorem \ref{thm:regime22}. The proofs in the second phase mainly involve semigroup theory. The third regime is more classical and its proofs mainly rely on stochastic calculus. It corresponds to the macroscopic phase and approximation  by the solution of the ODE
$$x'(t) = x(t) (b-d)(x(t)).$$
We refer to Theorem \ref{thm:3} for a precise statement, where approximation is obtained beyond scales  $K^{1/2+\varepsilon}$. 

\begin{figure}[h]
\centering
\includegraphics[width=0.6\textwidth]{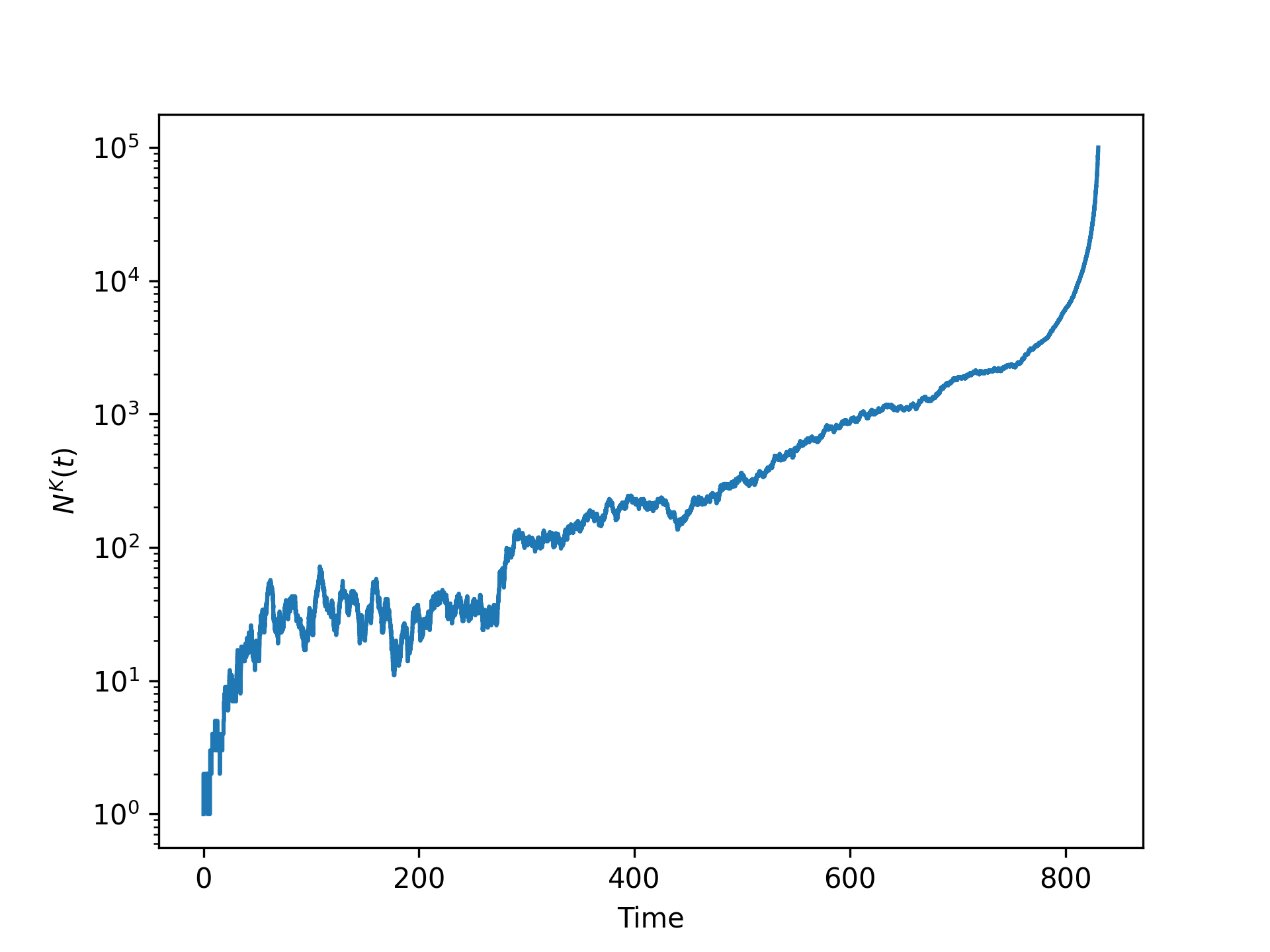}
\caption{Trajectory of $N^K_t$ from $1$ to $K,$ with $b(x)=0.5x+0.3$ and $d(x)=0.3$}
\end{figure}



{\bf Organization of the paper.} In Section~\ref{sec2}, we state our main results concerning each phase of invasion. Section~\ref{sec3} is devoted to the proofs of the main results.

\section{Main results}\label{sec2}
Under Assumption~\ref{assu}, the birth and death process $N^K$ can be represented as the unique strong solution in $\mathbb D([0,\infty), \mathbb R_+)$ of the following SDE (see e.g. Theorem~$IV.9.1$ of \cite{ikeda})
\begin{align}
N^K(t)=& N^K(0)+\int_0^t\int_0^{+\infty}\textbf{1}_{\{u\leq N^K(s^-)b(N^K(s^-)/K)\}} \pi^b(dsdu) \nonumber\\
&
-\int_0^t\int_0^{+\infty}\textbf{1}_{\{u\leq N^K(s^-)d(N^K(s^-)/K)\}}\pi^d(dsdu),\label{SDEdef}
\end{align}
where $\pi^b,\pi^d$ are independent Poisson measures on $\r_+^2$ with Lebesgue intensity. We consider
$$T^K_n=\inf \{t>0~:~N_t^K=n\} \in [0,\infty]$$
to be the hitting time of~$n$ by the process~$N^K$.

Our goal is to approximate formally the process $N^K$ starting from one at $t=0$, until it reaches the macroscopic level $K$ (i.e. at time $T^K_K$), as $K$ goes to infinity. For this purpose, we split the dynamics in three phases, respectively denoted \emph{microscopic}, \emph{intermediate} and \emph{macroscopic} and corresponding respectively to the following population sizes 
$$1\leq N^K_t\leq K^{1/2}; \quad 
K^{1/2}\leq N^K_t\leq K^{1/2+\eps},~~\textrm{for some }\eps>0; \quad K^{1/2+\eps}\leq N^K_t\leq K.$$
Notice that each of the three steps is characterized by the asymptotic behavior of the survival  probability, as $K$ goes to infinity. This is stated in the result below, whose proof is postponed to Appendix~\ref{sec:ext}. 
\begin{lemma}\label{lem:extinction} The following limits hold.
$ $
 \begin{enumerate}
 \item[$(i)$] For any $\beta\in [0,1/2),$
$$\lim_{K\rightarrow \infty} \,  {K^{1/2-\beta}} \,  \mathbb{P}_{\lfloor K^{\beta}\rfloor}\Big(T^K_0 > T^K_{\lfloor \sqrt{K}\rfloor} \Big) = \frac{1}{\int_0^1e^{-\frac{a}{2\tx} x^2}dx}.$$
\item[$(ii)$] $$\lim_{K\rightarrow \infty}\mathbb{P}_{\lfloor\sqrt{K}\rfloor}\Big(T^K_0 > T^K_K\Big)
\in(0,1).$$
\item[$(iii)$] For any $\beta > 1/2,$ $$\lim_{K\rightarrow \infty}\mathbb{P}_{\lfloor K^{\beta}\rfloor}\Big(T^K_0 > T^K_K \Big)
\longrightarrow 1.$$
\end{enumerate}
\end{lemma}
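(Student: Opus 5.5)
The natural tool is the explicit formula for hitting probabilities of a one-dimensional birth-death chain. The extinction and explosion events only depend on the embedded jump chain, whose up-probability at $n$ is $b(n/K)/(b(n/K)+d(n/K))$. Set
$$\gamma_k=\prod_{j=1}^{k}\frac{d(j/K)}{b(j/K)},\qquad \gamma_0=1.$$
Then for $0\le m\le M$,
$$\mathbb{P}_m\big(T^K_0>T^K_M\big)=\frac{\sum_{k=0}^{m-1}\gamma_k}{\sum_{k=0}^{M-1}\gamma_k}.$$
Here we use that $b>0$ near $0$ (since $b(0)=\rho>0$). If $d$ vanishes somewhere, $\gamma_k$ becomes $0$ beyond that point, which is harmless. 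All three statements then reduce to asymptotics of $\gamma_k$.

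\textbf{Asymptotics of $\gamma_k$ in the range $k\le \delta K$.} Write $\log(d/b)(x)=\log\frac{1-u(x)}{1+u(x)}$ with $u=(b-d)/(b+d)$. By Assumption~\ref{assu}, $u(x)=\frac{a}{2\rho}x+O(x^{1+\alpha'})$ on $(0,\delta]$, with $\alpha'=\min(\alpha,1)$, so $\log(d/b)(x)=-\frac{a}{\rho}x+O(x^{1+\alpha'})$. Summing over $j\le k$ gives
$$\log\gamma_k=-\frac{a}{2\rho}\frac{k^2}{K}+O\Big(\frac{k}{K}\Big)+O\Big(\frac{k^{2+\alpha'}}{K^{1+\alpha'}}\Big).$$
For $k\le C\sqrt K$ both error terms tend to $0$ uniformly, since $k^{2+\alpha'}/K^{1+\alpha'}\le C K^{-\alpha'/2}$.

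\textbf{Proof of (i).} Take $m=\lfloor K^\beta\rfloor$ and $M=\lfloor\sqrt K\rfloor$. Uniformly for $k\le M$ we have $\gamma_k=e^{-\frac{a}{2\rho}k^2/K}(1+o(1))$.
\begin{itemize}
\item The numerator satisfies $\sum_{k<m}\gamma_k\sim K^{\beta}$, because $k^2/K\le K^{2\beta-1}\to0$.
\item The denominator is a Riemann sum: $\sum_{k<M}\gamma_k\sim\sqrt K\int_0^1e^{-\frac{a}{2\rho}x^2}dx$.
\end{itemize}
The ratio, multiplied by $K^{1/2-\beta}$, gives the claimed limit. The case $\beta=0$ ($m=1$) is included.

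\textbf{Proofs of (ii) and (iii).} We need $\sum_{k<K}\gamma_k$. Split at $\delta K$ for a small fixed $\delta$.
\begin{itemize}
\item On $k\le\delta K$, the previous expansion gives $\log\gamma_k\le-\frac{a}{4\rho}k^2/K$ once $\delta$ is small (the error terms are dominated by the main term there). Dominated convergence then yields $K^{-1/2}\sum_{k\le\delta K}\gamma_k\to\int_0^\infty e^{-\frac{a}{2\rho}x^2}dx$.
\item On $\delta K<k<K$, we need $\gamma_k$ small. I would use that $b$ and $d$ are continuous on $[0,1]$ and $\gamma_{\lfloor\delta K\rfloor}\le e^{-c\delta^2K}$. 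Also $\log(d/b)$ is bounded above on $[\delta,1]$ wherever $d>0$, so $\gamma_k\le e^{-c\delta^2K+C'(k-\delta K)}$.
\end{itemize}
This crude bound can fail if $d/b$ is large on $[\delta,1]$. Assumption~\ref{assu} only controls $b-d$ near $0$, so for macroscopic $k$ the terms $\gamma_k$ could actually be large (an Allee-type barrier). This is the step I expect to be the main obstacle.

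To handle it, I would interpret $T^K_K$ in (ii)–(iii) as the hitting of level $\lfloor\delta K\rfloor$ followed by the macroscopic phase. Alternatively, I would note that the statements are natural under the implicit positivity $b>d$ on $(0,1]$, which holds in the examples. Under that positivity, $\log(d/b)<0$ on $[\delta,1]$ and $\gamma_k\le\gamma_{\lfloor\delta K\rfloor}\le e^{-c\delta^2 K}$, so the tail sum is $O(Ke^{-cK})=o(\sqrt K)$. I would add this hypothesis explicitly if needed.

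Granting the tail bound, (ii) follows:
$$\mathbb{P}_{\lfloor\sqrt K\rfloor}\big(T^K_0>T^K_K\big)\to\frac{\int_0^1e^{-\frac{a}{2\rho}x^2}dx}{\int_0^\infty e^{-\frac{a}{2\rho}x^2}dx}\in(0,1).$$
For (iii) the numerator is $\sum_{k<K^\beta}\gamma_k$, with $K^\beta/\sqrt K\to\infty$. The same dominated-convergence argument shows the numerator is $\sqrt K\int_0^{\infty}e^{-\frac{a}{2\rho}x^2}dx\,(1+o(1))$, minus the same tail as the denominator. Hence the ratio tends to $1$.
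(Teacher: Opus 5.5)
Your proof of (i) is the paper's proof. It uses the same gambler's-ruin formula with $\gamma_k=\prod_{j\le k}d(j/K)/b(j/K)$ and the same expansion $\ln\gamma_k=-\frac{a}{2\rho}\frac{k^2}{K}+o(1)$, uniform in $k\le\sqrt K$. Like the paper, you treat the numerator as $\sim K^\beta$ and the denominator as a Riemann sum. Your bookkeeping of the $O(x^2)$ contribution of $b+d$ via $\alpha'=\min(\alpha,1)$ is slightly more careful than the paper's.

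For (ii)--(iii) the paper only says they ``follow from similar calculations''. The obstacle you flag is real and not an artefact of your method. Assumption~\ref{assu} only requires the ratios $|b-d-ax|/x^{1+\alpha}$ and $|b+d-2\rho|/x$ to be finite on $(0,1]$. At macroscopic $x$ the error terms may therefore dominate $ax$.

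For instance, $b(x)=\rho+ax$ and $d(x)=\rho+Ax^2$ satisfy Assumption~\ref{assu} with $\alpha=1$. For $A$ large, however, $\int_0^1\ln(d/b)\,dx>0$. A Riemann sum then gives $\gamma_{K-1}\ge e^{cK}$ for some $c>0$, while $\gamma_k\le 1$ for $k\le K^\beta$ with $\beta<1$. Hence $\mathbb{P}_{\lfloor K^\beta\rfloor}(T^K_0>T^K_K)\le K^\beta e^{-cK}\to0$ for every $\beta\in[1/2,1)$, which contradicts both (ii) and (iii).

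So a macroscopic hypothesis has to be added to the statement. Any of the following works:
\begin{itemize}
\item your condition $b>d$ on $(0,1]$, which holds in both of the paper's examples;
\item any condition ensuring $\sup_{\delta K\le k<K}\gamma_k=o(K^{-1/2})$;
\item replacing $T^K_K$ by $T^K_{\lfloor\delta K\rfloor}$, in the spirit of the cutoff $v$ used in Section~\ref{sec:regime3}.
\end{itemize}
Under such a hypothesis your argument is a complete proof of (ii) and (iii). It combines dominated convergence on $k\le\delta K$, with domination by $Ce^{-\frac{a}{4\rho}k^2/K}$, and the monotone tail bound $\gamma_k\le\gamma_{\lfloor\delta K\rfloor}\le e^{-c\delta^2K}$.
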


This result reveals the three phases that we study below.


\subsection{Microscopic phase}\label{sec:regime1}


In this regime, we start from one individual, i.e. $N^K_0=1$. In particular, the extinction probability vanishes as $K$ goes to infinity; see Lemma~\ref{lem:extinction}$.(i)$. So we study the law of $N^K$ conditionally to its survival, until reaching scale $\sqrt{K}$. For convenience we consider $K$ such that $\sqrt{K}$ is an integer, and let $K$ go to infinity. However, all the results can be extended by writing $\lfloor \sqrt{K}\rfloor$ instead of $\sqrt{K}$.
In this microscopic phase, the trajectory will be given by the unique strong solution  in $\mathbb D([0,\infty), \mathbb R_+)$ of the following SDE (see e.g. Theorem~$IV.9.1$ of \cite{ikeda}) \begin{equation}\label{eq:w}
W_t \;=\int_0^t(2\tx + aW_s^2)\,ds \;+\; \int_0^{t} \sqrt{2\tx W_s}\,dB_s, \
\end{equation}
for  $t\leq \tau_1=\inf\{t\geq 0 : W_t=1\}$, 
with $B$ a standard one-dimensional Brownian motion, and $W_t=1$ for $t\geq \tau_1$.

\begin{theorem}\label{thm:1}
For any function~$F:\mathbb D([0,\infty),[0,\infty))\rightarrow\mathbb{R}$ continuous and bounded,
\begin{align*}
&\mathbb{E}\!\left[
F\left({K}^{-1/2} \, N^K_{t\sqrt{K}\wedge T^K_{\sqrt K}} \, : \, t\geq 0  \right) {\bf 1}_{ T^K_{\sqrt{K}}< T^K_0}
\right]\\
&\qquad \qquad \qquad \underset{K\rightarrow\infty}{\sim} \frac{1}{\sqrt{K}} \displaystyle\mathbb{E}\!\left[
F\big(W_{t} : t\geq 0\big)\, \,
\exp\!\left(a\int_0^{\tau_1} \!W_s ds\right)
\right].    
\end{align*}
\end{theorem}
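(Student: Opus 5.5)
We will prove Theorem~\ref{thm:1} by an $h$-transform. The martingale is built from the critical linear birth--death process, and the correction coming from cooperation is kept track of explicitly.

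\textbf{Step 1: the change of probability.} Set $X^K_t=K^{-1/2}N^K_{t\sqrt K}$. Since $x\mapsto x$ is harmonic for a critical branching process, we consider the process $M_t = X^K_{t\wedge \tau^K}\exp\big(-\int_0^{t\wedge\tau^K} G_K(X^K_s)\,ds\big)$, where $\tau^K=K^{-1/2}(T^K_{\sqrt K}\wedge T^K_0)$. The function $G_K$ is chosen so that $M$ is a martingale, namely $G_K(x)=\sqrt K\,(b-d)(x/\sqrt K)$. By Assumption~\ref{assu}, $G_K(x)= ax+O(K^{-\alpha/2}x^{1+\alpha})$ uniformly for $x\le 1$.

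Since $M_0=K^{-1/2}$ and $X^K_{\tau^K}=\mathbf 1_{T^K_{\sqrt K}<T^K_0}$, the statement follows once optional stopping is justified at $\tau^K$. Under the tilted law $\mathbb Q^K=\sqrt K\, M_{\tau^K}\cdot\mathbb P$ we then get
\[
\mathbb E\big[F(X^K)\mathbf 1_{T^K_{\sqrt K}<T^K_0}\big]=K^{-1/2}\,\mathbb E_{\mathbb Q^K}\Big[F(X^K)\exp\Big(\int_0^{\tau^K}G_K(X^K_s)ds\Big)\Big].
\]
Optional stopping requires $\tau^K<\infty$ a.s., which holds because the process is killed at $0$ or exits at level $\sqrt K$. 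It also requires uniform integrability. For this we use $M\le 1$ together with the fact that $G_K\ge -C'$ on $[0,1]$ up to $o(1)$, or more directly the boundedness of $X^K$.

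\textbf{Step 2: convergence under $\mathbb Q^K$.} Under $\mathbb Q^K$ the process never hits $0$ before $1$. Its jump rates are modified by the factor $(n\pm1)/n$, so the generator becomes
\[
L f(x)\approx \Big(\tfrac{b+d}{2}\cdot 2\rho/\rho\ \text{terms}\Big)\ \to\ (2\rho+ax^2)f'(x)+\rho x f''(x).
\]
Concretely, the upward rate $\sqrt K\, n b\cdot\frac{n+1}{n}$ and the downward rate $\sqrt K\, n d\cdot\frac{n-1}{n}$ produce drift $\frac{1}{\sqrt K}\big(n(b-d)+b+d\big)\to ax^2+2\rho$ and diffusion coefficient $\frac{n(b+d)}{K}\to 2\rho x$. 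This matches the generator of $W$ in \eqref{eq:w}. We then prove tightness via Aldous' criterion and identify the limit through the martingale problem. Uniqueness of the limit follows from strong uniqueness of \eqref{eq:w}. Near $0$ the coefficient $\sqrt{2\rho W}$ is not Lipschitz, but Yamada--Watanabe applies. The initial condition $X^K_0=K^{-1/2}\to0$ poses no problem because the drift $2\rho>0$ pushes the process away from $0$. We also need $\tau^K\to\tau_1$ in law jointly with the path. This follows because $W$ crosses level $1$ transversally, so the hitting time is a continuous functional at $W$ almost surely.

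\textbf{Step 3 (main obstacle): passing to the limit in the exponential weight.} The functional $\exp(\int_0^{\tau^K}G_K)$ is unbounded: it is of size $e^{a\tau^K}$, and $\tau^K$ may be large under $\mathbb Q^K$. We therefore need uniform integrability, namely $\sup_K \mathbb E_{\mathbb Q^K}[e^{(a+\delta)\tau^K}]<\infty$ for some $\delta>0$. We would prove this exponential moment estimate by a Lyapunov/supermartingale argument. The key is to find $\phi$ and $\lambda>a$ such that $e^{\lambda t}\phi(X^K_t)$ is a $\mathbb Q^K$-supermartingale up to $\tau^K$, uniformly in $K$. A natural candidate is built from the limiting ODE problem $\rho x\phi''+(2\rho+ax^2)\phi'+\lambda\phi=0$ with $\phi>0$ on $[0,1]$. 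We then verify the discrete version with error terms controlled by Assumption~\ref{assu}.

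A cleaner alternative exploits the identity from Step 1 itself. Taking $F\equiv1$ gives $\mathbb E_{\mathbb Q^K}[e^{\int G_K}]=\sqrt K\,\mathbb P(T_{\sqrt K}<T_0)$, which converges by Lemma~\ref{lem:extinction}(i). If the limit equals $\mathbb E[e^{a\int_0^{\tau_1}W}]$, Scheffé-type reasoning (weak convergence plus convergence of means of nonnegative variables) yields uniform integrability directly. Computing $\mathbb E[e^{a\int W}]=1/\int_0^1e^{-ax^2/(2\rho)}dx$ requires solving the Feynman--Kac ODE $\rho x u''+(2\rho+ax^2)u'+axu=0$ with $u(1)=1$. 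Its solution is $u=\big(x\int_0^1\ldots\big)$-type and can be checked explicitly. Combining this with Step 2 and the continuity of $(F,\tau)$ gives the theorem.
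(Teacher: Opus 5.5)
Your Steps 1--2 are the paper's argument: the same martingale $N^K_t\exp(-\int_0^t(b-d)(N^K_s/K)\,ds)$, the same transformed rates $(n+1)b$ and $(n-1)d$, and the same limit $W$. There are only cosmetic slips. Your displayed drift and variance have wrong powers of $K$; they are $n(b-d)+(b+d)$ and approximately $n(b+d)/\sqrt K$. Optional stopping should use $G_K\ge 0$ on $[0,1]$ for large $K$, so that $0\le M\le 1$.

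\textbf{The gap: your first route in Step 3.} The estimate you declare necessary, $\sup_K\mathbb{E}_{\mathbb{Q}^K}[e^{(a+\delta)\tau^K}]<\infty$, is false in general. Bounding $\int_0^{\tau^K}G_K(X^K_s)\,ds$ by $a\tau^K$ ignores that the conditioned process spends a long time near $0$, where the weight is negligible. Already for the limit, take $x_0=\sqrt{\rho/a}$; below $x_0$ the drift of $W$ is at most $3\rho$. Comparison with $d\hat W=3\rho\,dt+\sqrt{2\rho\hat W}\,dB$ (a time-changed squared Bessel process, for which $\hat\tau_{x_0}\overset{d}{=}x_0\hat\tau_1$) gives $\mathbb{P}(\tau_1>t)\ge c\,e^{-C\sqrt{a\rho}\,t}$ with $C$ universal. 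Hence $\mathbb{E}[e^{a\tau_1}]=\infty$ once $a/\rho>C^2$, and weak convergence then forces $\liminf_K\mathbb{E}_{\mathbb{Q}^K}[e^{(a+\delta)\tau^K}]=\infty$. Correspondingly, your ODE $\rho x\phi''+(2\rho+ax^2)\phi'+\lambda\phi=0$ has no positive solution on $[0,1]$ with $\lambda>a$ in that regime. The Lyapunov weight must be the additive functional, not time. You need $\phi>0$ with $\rho x\phi''+(2\rho+ax^2)\phi'+\beta x\phi\le 0$ for some $\beta>a$. With $v=x\phi$ this reads $\rho v''+axv'+(\beta-a)v\le 0$, which is solvable for $\beta-a$ below a Dirichlet eigenvalue. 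Its uniform-in-$K$ version is exactly the paper's Lemma~\ref{estmomexpo}: a bound on $\exp(\beta\int_0^{\tau_1^K}W^K_s\,ds)$, proved by a discrete recursion on $\phi_K(i)$.

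\textbf{The Scheff\'e alternative.} This is a genuinely different route and can be made to work. Set $Y_K=\exp(\int_0^{\tau^K}G_K(X^K_s)\,ds)$ and $s(x)=\int_0^x e^{-ay^2/(2\rho)}dy$. Then $Y_K\Rightarrow Y=\exp(a\int_0^{\tau_1}W_s\,ds)$, and $\mathbb{E}_{\mathbb{Q}^K}[Y_K]=\sqrt K\,\mathbb{P}_1(T^K_{\sqrt K}<T^K_0)\to 1/s(1)$ by Lemma~\ref{lem:extinction}(i) with $\beta=0$. Uniform integrability follows once you know $\mathbb{E}[Y]=1/s(1)$. That identity is the crux, and the paper itself only obtains it a posteriori from Theorem~\ref{thm:1}.

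The easy direction is the useless one. Fatou along the weak convergence gives $\mathbb{E}[Y]\le 1/s(1)$. So does the Feynman--Kac solution $u(x)=s(x)/(x\,s(1))$, because the positive local martingale $u(W_t)e^{a\int_0^t W_s ds}$ is only a supermartingale. What you need is $\mathbb{E}[Y]\ge 1/s(1)$, and ``can be checked explicitly'' does not provide it. One way to get it:
\begin{itemize}
\item Fix $x\in(0,1)$, let $dZ=aZ^2dt+\sqrt{2\rho Z}\,dB$ (the paper's $\bar\xi$), and set $\sigma=\tau_0\wedge\tau_1$. Then $Z_{t\wedge\sigma}e^{-a\int_0^{t\wedge\sigma}Z_s ds}$ is a martingale bounded by $1$.
\item Girsanov turns $Z$ into $W$ under the tilted law, so $\mathbb{E}_x[e^{a\int_0^{\tau_1}W_s ds}]=\mathbb{P}^Z_x(\tau_1<\tau_0)/x=s(x)/(x\,s(1))$.
\item The strong Markov property at the hitting time of $x$ from $0$ gives $\mathbb{E}_0[Y]\ge s(x)/(x\,s(1))$, which tends to $1/s(1)$ as $x\to 0$.
\end{itemize}
With this filled in, your route closes and is much shorter than the paper's. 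The price is that it rests on explicit formulas (birth--death hitting probabilities and the scale function of $Z$). The paper's exponential-moment lemma needs no such formulas and also yields the stronger $\beta>a$ integrability.
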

Applying this to $F=1$, 
$$\mathbb P_1(T^K_{\sqrt{K}}< T^K_0)\sim_{K\rightarrow \infty} \frac{c}{\sqrt{K}}$$
where, using also  the previous Lemma~\ref{lem:extinction}.$(i)$
$$c=\displaystyle
\mathbb{E}^x\!\left[
\exp\!\left(a\int_0^{{T_1}} \! W_s ds\right)
\right]=\left(\int_0^1e^{-\frac{a}{2\tx} x^2}dx\right)^{-1} \in (0,1).$$
This theorem yields  the trajectory of the birth and death process  conditioned to reach  $\sqrt{K}$ before $0$, and thus survive at least the time needed to reach $\sqrt{K}$.

\begin{figure}[h]
\centering
\includegraphics[width=0.6\textwidth]{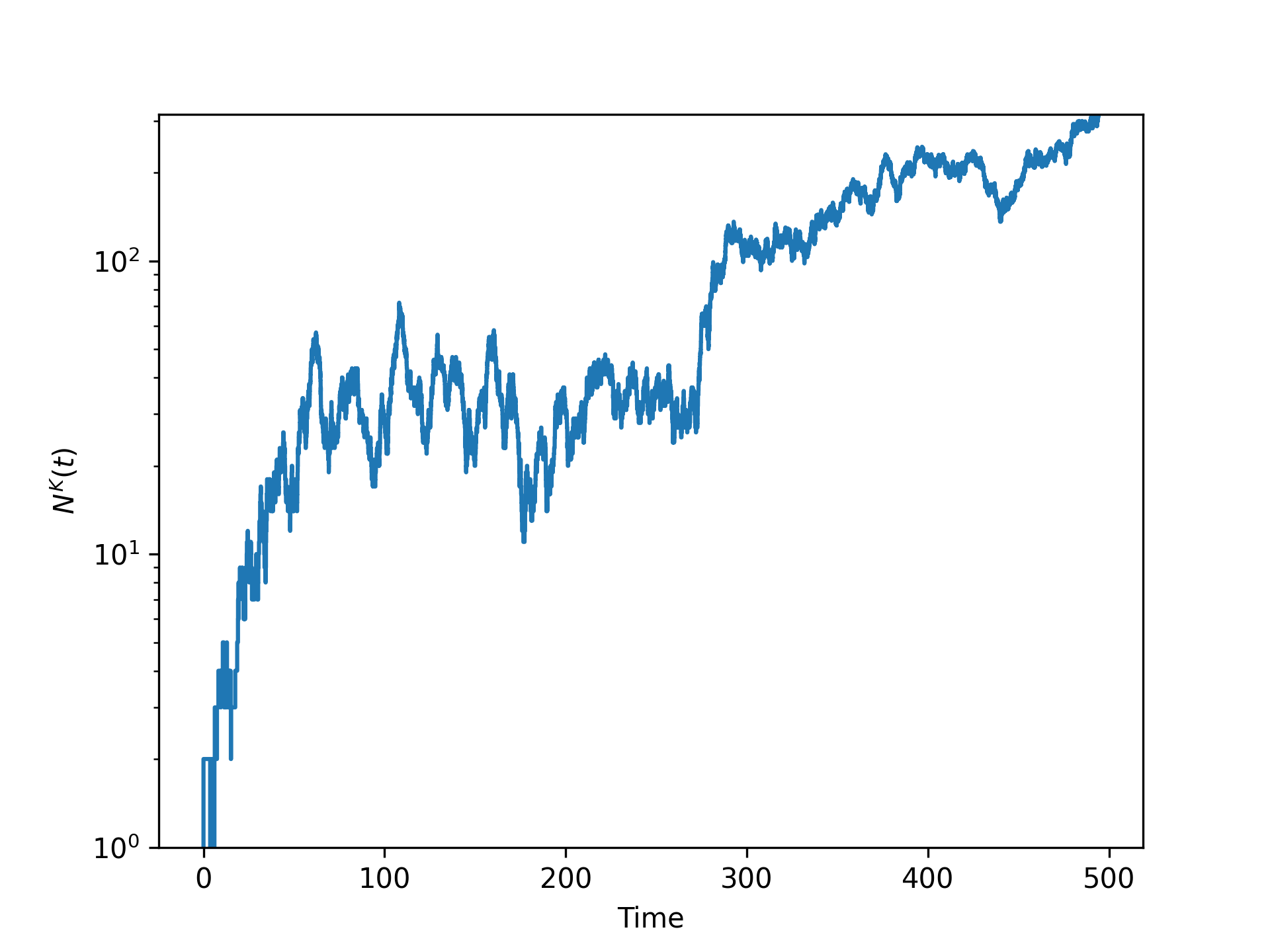}
\caption{Trajectory of $N^K_t$ from $1$ to $\sqrt{K},$ with $b(x)=0.5x+0.3$ and $d(x)=0.3$}
\end{figure}

\subsection{Intermediate phase}\label{sec:regime2}
In  the previous subsection, the process has reached  $K^{1/2}$. We start from $N^K_0=K^{1/2}$.
Let for $t\geq 0$
$$\xi^K_t := K^{-1/2} N^K_{t K^{1/2}}, $$
starting from $1$ and  consider the unique strong solution $\bar\xi$ of the following SDE
$$\bar\xi_t =1+  \int_0^t a\bar \xi_s^2 ds +\int_0^t \sqrt{2\tx \bar \xi_s} dB_s.$$
Observe that the solution may explode in finite time, but will be considered before reaching level $\sqrt{K}^{1+\varepsilon}$.
Recall from Lemma~\ref{lem:extinction}$ (ii)$ that  the process $N^K$ can either go extinct or not with positive probability, as $K$ goes to infinity.
Writing  
$$\tau(v) := \inf\{t>0~:~\bar \xi_t=v\},$$
two events may occur
\begin{itemize}
    \item[$\searrow$] either $N^K$ comes back to microscopic scale, i.e. $\tau(K^{-\eta})<\tau(K^{\varepsilon})$
    and we are back to the first phase,  where the probability of extinction goes to one,
    \item[$\nearrow$] 
or $N^K$ goes beyond scale $\sqrt{K}$,
    i.e. $\tau(K^{-\eta})>\tau(K^{\varepsilon})$
    and we will enter the third regime, leading to macroscopic phase.
\end{itemize}

These two scenarios are illustrated in Figure~\ref{fig:comparison}.
\begin{figure}[htbp]
\centering
\begin{subfigure}{0.6\textwidth}
\centering
\includegraphics[width=\linewidth]{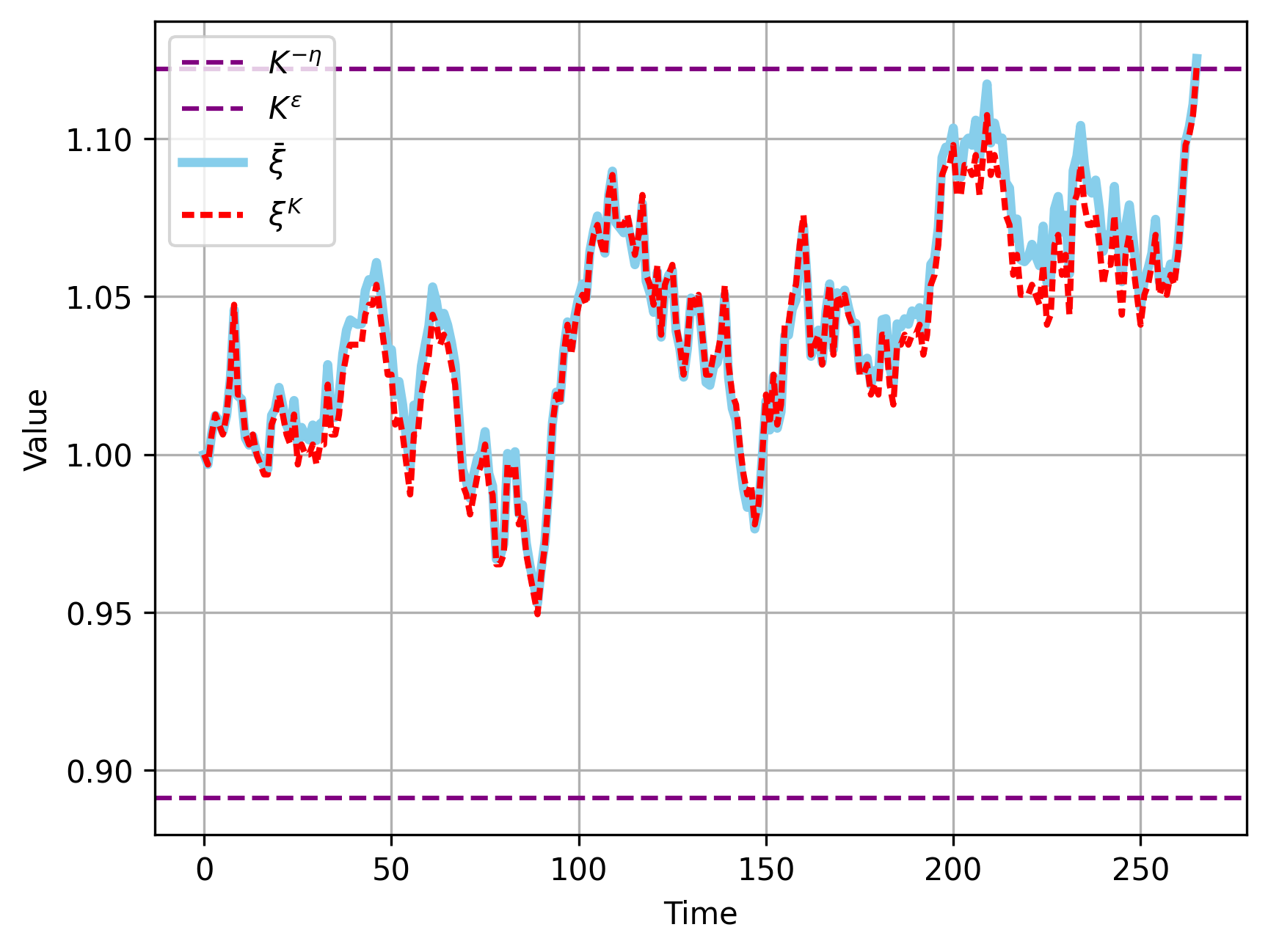}
\label{fig:2}
\end{subfigure}
\hfill

\begin{subfigure}{0.6\textwidth}
\centering
\includegraphics[width=\linewidth]{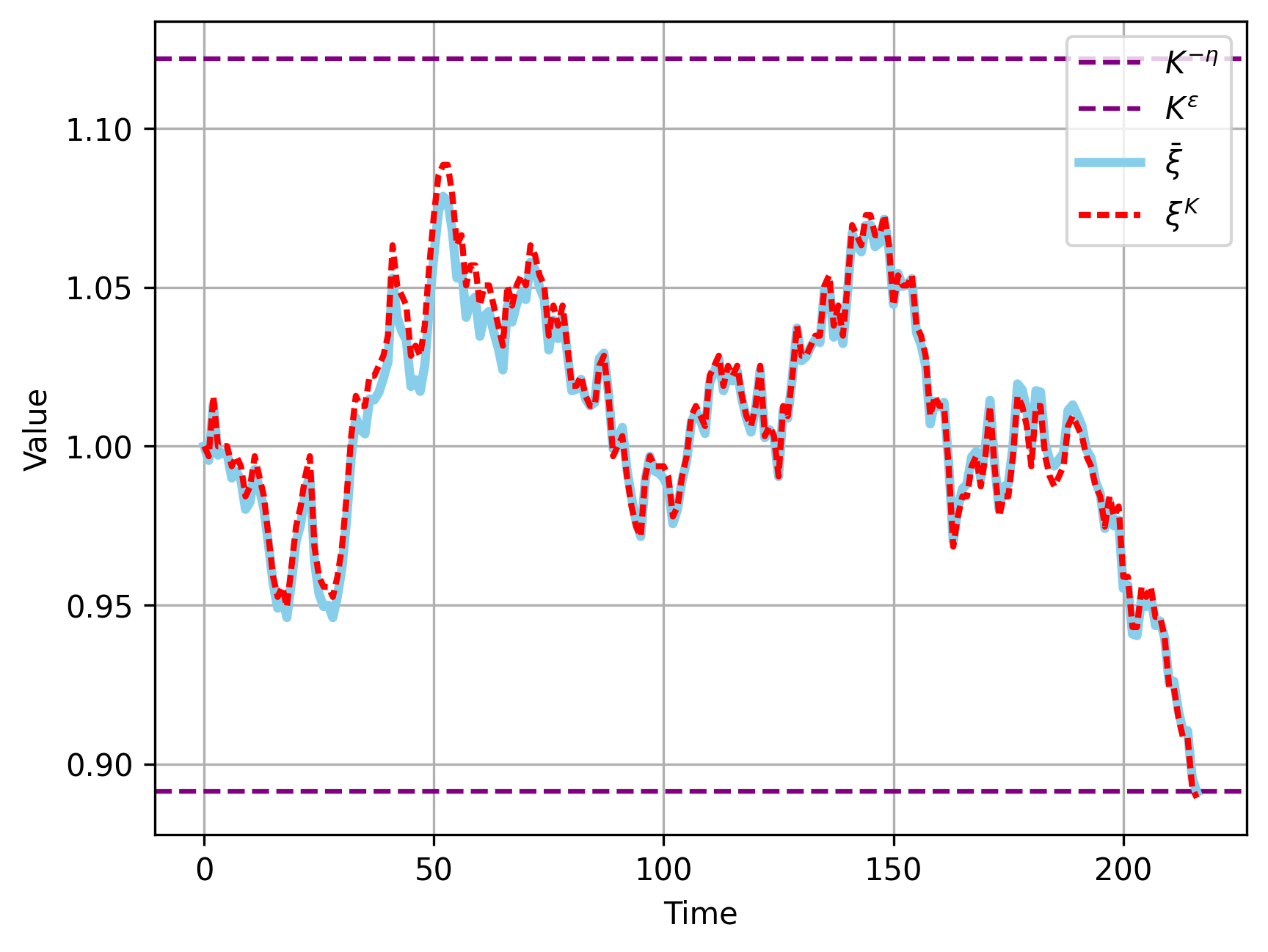}
\label{fig:1}
\end{subfigure}

\caption{Trajectory Coupling Between  $\bar{\xi}$ and $\xi^K$ for $K^{-\eta}\leq\xi^K\leq K^{\eps},$ with $b(x)=0.5x+0.3$, $d(x)=0.3$, $\varepsilon=0.0005$ and $\eta=0.01$}
\label{fig:comparison}
\end{figure}
The approximation result for this regime is formally stated as follows.
\begin{theorem}\label{thm:regime22}
For all $n\in\mathbb{N}^*,g_1,...,g_n\in C^3_b(\r)$, there exist (arbitrarily small) $\eps,\eta>0$ such that the following convergence in probability holds
$$\sup\left|\mathbb{E}_x\left[g_1(\xi^K_{t_1})...g_n(\xi^K_{t_n})\right] - \mathbb{E}_x\left[g_1(\bar\xi_{t_1})...g_n(\bar\xi_{t_n})\right]\right|\underset{K\to\infty}{\longrightarrow}0,$$
 where the $\sup$ is over $(t_1,\ldots  ,t_n) \in [0,\ \tau(K^\eps)\wedge \tau(K^{-\eta})]^n.$
\end{theorem}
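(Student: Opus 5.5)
We will compare the semigroups of $\xi^K$ and $\bar\xi$ through a Trotter--Kato/Lindeberg-type telescoping argument on the random time window where both processes stay in $[K^{-\eta},K^{\eps}]$. The generator of $\xi^K$ acting on $g\in C^3_b$ is
\[
L^K g(x)=\sqrt{K}\,x\Big[\sqrt{K}\,b(x/\sqrt K)\big(g(x+K^{-1/2})-g(x)\big)+\sqrt K\,d(x/\sqrt K)\big(g(x-K^{-1/2})-g(x)\big)\Big],
\]
and the generator of $\bar\xi$ is $\bar L g(x)=a x^2 g'(x)+\rho x g''(x)$.

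\textbf{Step 1: local generator estimate.} Taylor-expand $g$ to third order and use Assumption~\ref{assu} with $y=x/\sqrt K\le K^{\eps-1/2}\le 1$. The first-order term is $K\,x\,(b-d)(y)\,K^{-1/2}g'(x)=a x^2 g'+O(x^{2+\alpha}K^{-\alpha/2})$. The second-order term is $\tfrac12 x (b+d)(y) g''=\rho x g''+O(x^2K^{-1/2})$. The third-order remainder is $O(x K^{-1/2}\|g'''\|)$. Hence, for $x\in[K^{-\eta},K^{\eps}]$,
\[
|L^Kg(x)-\bar Lg(x)|\le C_g\big(K^{(2+\alpha)\eps-\alpha/2}+K^{2\eps-1/2}+K^{\eps-1/2}\big)=:\delta_K\to0,
\]
provided $\eps<\alpha/(2(2+\alpha))$. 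This is why $\eps$ must be chosen small, depending on $\alpha$. The same bound holds uniformly for the derivatives that appear below.

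\textbf{Step 2: stopped semigroups and time horizon.} Let $\theta=\tau(K^\eps)\wedge\tau(K^{-\eta})$, with the analogous exit time $\theta^K$ for $\xi^K$. We will first show that $\theta$ (and $\theta^K$) is at most polynomial in $K$ with high probability. Above level $1$ the drift $a x^2$ makes exit occur in time $O(1)$. Below $1$ the process behaves like a Feller diffusion, whose exit time from $[K^{-\eta},1]$ is $O(1)$ with overwhelming probability, with tails like $e^{-ct}$. So $\pro{\theta>T_K}\to0$ for, say, $T_K=\log K$.

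\textbf{Step 3: Duhamel identity.} For a single time, write
\[
\E_x[g(\xi^K_{t\wedge\theta^K})]-\E_x[g(\bar\xi_{t\wedge\theta})]=\E_x\Big[\int_0^{t\wedge\theta^K}(L^K-\bar L)\,u(t-s,\xi^K_s)\,ds\Big],
\]
where $u(s,x)=\E_x[g(\bar\xi_{s\wedge\theta})]$ solves the backward Kolmogorov equation for the stopped diffusion. This gives an error of at most $T_K\,\delta_K\,\sup_{s\le T_K}\|u(s,\cdot)\|_{C^3([K^{-\eta},K^\eps])}$. The multi-time functional $g_1(\xi_{t_1})\cdots g_n(\xi_{t_n})$ is handled by induction on $n$ using the Markov property, since each step adds one such error term. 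Uniformity in the $t_i$ over the window comes from the fact that the bound does not depend on the $t_i\le T_K$.

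\textbf{Main obstacle.} The hard part is the $C^3$ bound on $u$. The diffusion coefficient $\sqrt{2\rho x}$ degenerates at $0$, and the drift $a x^2$ is superlinear, so derivative bounds can blow up like negative powers of $K^{-\eta}$ near the lower boundary and like powers of $K^{\eps}$ at the top. I would control these via the Lamperti change of variables $z=\sqrt{x}$, which makes the diffusion coefficient constant. On $[K^{-\eta/2},K^{\eps/2}]$ the drift then has derivatives bounded polynomially in $K^{\eta}$ and $K^{\eps}$, so derivative flow estimates (Gronwall on $\partial_x\bar\xi$ up to third order) give $\|u\|_{C^3}\le C K^{c(\eta+\eps)}$ over times $T_K=\log K$. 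Choosing $\eps,\eta$ small enough that $K^{c(\eta+\eps)}\log K\,\delta_K\to0$ concludes the proof; this is exactly why the statement only asserts the result for some sufficiently small $\eps,\eta$. Boundary effects from the stopping can be avoided by replacing the stopped diffusion with one whose coefficients are smoothly frozen outside the window, since both processes coincide with the unfrozen ones before $\theta$.
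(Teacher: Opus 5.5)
Your outline follows the paper's proof almost step for step:
\begin{itemize}
\item coefficients frozen outside $[K^{-\eta},K^\eps]$ (the paper's $\chi_K$, $\mu_K$, $\sigma_K$);
\item a Taylor estimate of the generator difference from Assumption~\ref{assu}, where your $\delta_K$ plays the role of the paper's $K^{-\alpha/3}$;
\item the Trotter--Kato/Duhamel identity;
\item induction on $n$ via the Markov property;
\item discarding the event that the exit time exceeds a horizon.
\end{itemize}
Your horizon $\log K$ is actually sharper than the paper's $K^\delta$. For $\bar\xi$ on $[K^{-\eta},K^\eps]$, the Green-function integral $\int G(x,y)m(y)\,dy$ is bounded uniformly in $K$: the speed density $e^{ay^2/(2\rho)}/(\rho y)$ is compensated by $s(K^\eps)-s(y)\le \frac{\rho}{ay}e^{-ay^2/(2\rho)}$. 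Hence exit times have uniformly exponential tails. The paper only uses $\mathbb{E}[\bar T]\le CK^{2\eps+\eta}$ together with Markov's inequality.

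The step that does not hold as you justify it is the bound $\|u\|_{C^3}\le CK^{c(\eps+\eta)}$ obtained ``by Gronwall''.
\begin{itemize}
\item After $z=\sqrt{x}$, the drift is $\frac a2 z^3-\frac{\rho}{4z}$. Its derivative $\frac{3a}{2}z^2+\frac{\rho}{4z^2}$ is positive throughout the window, of size $K^{\eps}$ at the top and $K^{\eta}$ at the bottom.
\item So while $Z$ stays in the window, the first variation $J_t=\exp\bigl(\int_0^t b'(Z_s)\,ds\bigr)$ genuinely grows.
\item Gronwall then only gives $\exp\bigl(cT_K(K^\eps+K^\eta)\bigr)=K^{c(K^\eps+K^\eta)}$ for $T_K=\log K$, which is not polynomial. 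No horizon of order one or more repairs this, and the same happens for the second and third variations.
\end{itemize}

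A polynomial bound is plausible, because the expanding zones are crossed quickly: for $x'=ax^2$ one has $\partial_{x_0}x_t=(x_t/x_0)^2\le K^{2(\eps+\eta)}$ inside the window. But it has to be proved through exponential moments rather than Gronwall. By Girsanov,
$\partial_x\mathbb{E}_x[g(\bar\zeta^K_t)]=\mathbb{E}^Q_x\bigl[g'(\bar\zeta^K_t)\exp\bigl(2a\int_0^t\chi_K\chi_K'(\bar\zeta^K_s)\,ds\bigr)\bigr]$,
where under $Q$ the drift is $a\chi_K^2+\rho\chi_K'$. So you need a Feynman--Kac estimate in the spirit of Lemma~\ref{estmomexpo}, and analogous estimates for the higher variations.

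The paper asserts the same polynomial bound by reference to Erny--L\"ocherbach--Loukianova, over the even longer horizon $K^\delta$, without such an argument. You are therefore not departing from the paper here, but this is where both arguments are incomplete.

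A second, smaller point is also shared with the paper's final step. Comparing frozen-coefficient semigroups at fixed times does not by itself control the stopped quantities $\mathbb{E}_x[g(\xi^K_{t\wedge\theta^K})]$, since the exit time is a path functional. So ``both processes coincide before $\theta$'' needs an additional argument.
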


\subsection{Macroscopic phase}\label{sec:regime3}

In this last regime, we start from $N^K_0 = K^{\eps + 1/2}$ (for some $0<\eps<1/2$) and consider 
$$X^K_t = \frac{N^K_t}{K}$$
We now compare this stochastic process with the deterministic solution of
$$x'(t) = x(t) (b-d)(x(t)),$$
 and introduce the hitting time of $v$ when this solution starts from $x_0$
$$\tau(x_0, v) := \inf\{t>0~:~x(t)=v\textrm{, with }x(0) = x_0\}.$$

\begin{figure}[h]
\centering
\includegraphics[width=0.6\textwidth]{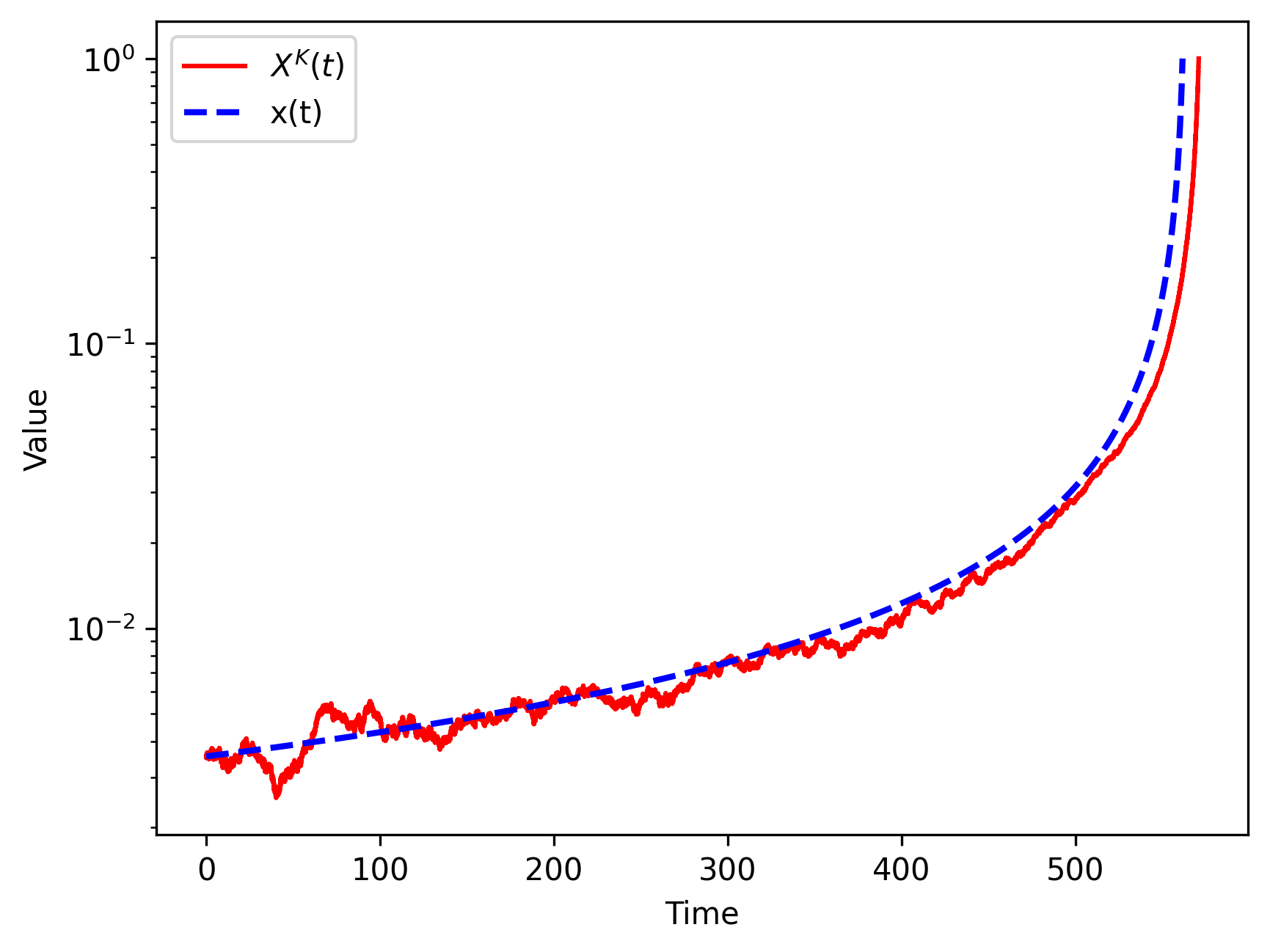}
\caption{Trajectories of $X^K_t$ and $x(t),$ with $b(x)=0.5x+0.3$, $d(x)=0.3$}

\end{figure}

Recalling Assumption~\ref{assu},
 the function $b-d$ is Lipschitz continuous on. Consequently, the function $(x(t))_t$ is well-defined. In addition, as a consequence of Assumption~\ref{assu}, there exists some $v>0$ such that, for $x\leq v,$
$$(b-d)(x)\geq \frac{a}{2}x,$$
hence $\tau(x_0,v)$ is finite for any~$x_0<v$. In the rest of this section, we fix such a~$v$.

\begin{lemma}\label{lem:341}
Provided that
$$X^K(0) = x_K(0) \underset{K\rightarrow\infty}{\sim} \frac1{K^\beta},$$
with $\beta< 1/4$ (in other words, $N^K(0)\sim K^\gamma$ with $\gamma > 3/4$),
$$\underset{t\leq \tau(x_K(0),v)}{\sup}\left|\frac{X^K(t)}{x_K(t)} - 1\right|\overset{\mathbb{P}}{\longrightarrow}0.$$
\end{lemma}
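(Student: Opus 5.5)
We will compare $X^K$ with $x_K$ through the ratio $R^K_t := X^K_t/x_K(t)$, or more conveniently through $\log X^K_t - \log x_K(t)$, as long as $X^K$ stays away from zero. Write the semimartingale decomposition from \eqref{SDEdef}:
$$X^K_t = X^K_0 + \int_0^t X^K_s (b-d)(X^K_s)\,ds + M^K_t,\qquad \langle M^K\rangle_t = \frac1K\int_0^t X^K_s (b+d)(X^K_s)\,ds .$$
Since $x_K(t) = x_K(0) + \int_0^t x_K(s)(b-d)(x_K(s))ds$, we define $D_t = X^K_t - x_K(t)$ and the stopping time $\sigma = \inf\{t : |X^K_t/x_K(t)-1|>\delta\}$ for a fixed small $\delta$. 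On $[0,\sigma]$, $X^K$ and $x_K$ are comparable and both lie in $[0,(1+\delta)v]$, where $x\mapsto x(b-d)(x)$ is Lipschitz with constant $L$. Gronwall then gives
$$\sup_{t\le \sigma\wedge\tau}|D_t| \le e^{L\tau}\sup_{t\le\sigma\wedge\tau}|M^K_t|,\qquad \tau=\tau(x_K(0),v).$$
This crude form is too weak, because $e^{L\tau}$ is only polynomial in $x_K(0)^{-1}$. We therefore work multiplicatively: divide by $x_K(t)$ and use the linearization $(b-d)(x)\approx ax$ near $0$.

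Concretely, we will apply Itô's formula to $Y_t = D_t/x_K(t)$:
$$dY_t = \frac{1}{x_K}\big[X(b-d)(X) - x_K(b-d)(x_K) - Y x_K (b-d)(x_K)\big]dt + \frac{dM_t}{x_K(t)}.$$
The bracket equals $x_K Y\,[(b-d)(X) + x_K\,\partial(b-d)\text{-type terms}]$, so on $[0,\sigma]$ we get $|dY_t|\le C\,x_K(t)\,|Y_t|\,dt + |dM_t|/x_K(t)$. This is the key point: the Gronwall factor becomes $\exp(C\int_0^\tau x_K(s)ds)$, and since $x_K' \ge \frac a2 x_K^2$ for $x_K\le v$, we have $\int_0^\tau x_K(s)\,ds = \int_{x_K(0)}^v \frac{dy}{(b-d)(y)} \le \frac{2}{a}\log\frac{v}{x_K(0)} = O(\log K)$. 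Thus $e^{C\int x_K}$ is at most $K^{C'}$, still polynomial. To avoid this loss we will instead use the exact variational structure: the linearized equation $\dot Y = x_K\,\phi'(x_K)... $ has fundamental solution $\frac{x_K(t)(b-d)(x_K(t))}{x_K(0)(b-d)(x_K(0))}\cdot\frac{x_K(0)}{x_K(t)}$, which stays bounded by a constant (both ratios are of order one, since $(b-d)(x)\sim ax$). So we write $Y$ via Duhamel with this bounded propagator, plus a quadratic remainder $O(Y^2 x_K)$, which is controlled on $[0,\sigma]$ with small $\delta$ by a bootstrap.

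It remains to bound the martingale term $\int_0^{t} \Phi(t,s)\,dM_s/x_K(s)$ uniformly in $t\le\tau$. Its quadratic variation is at most
$$C\int_0^\tau \frac{X_s(b+d)(X_s)}{K x_K(s)^2}ds \le \frac{C}{K}\int_0^\tau \frac{ds}{x_K(s)} = \frac{C}{K}\int_{x_K(0)}^v\frac{dy}{y^2(b-d)(y)}\le \frac{C'}{K\,x_K(0)^2}\sim C' K^{2\beta-1}.$$
This tends to $0$ because $\beta<1/2$; the stated $\beta<1/4$ gives extra room, for instance for the cruder Gronwall route where the polynomial loss $K^{C'}$ would have to be absorbed. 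The sup in $t$ is handled by writing the propagator as a product $\Phi(t,s)=g(t)/g(s)$ with $g$ bounded above and below, so that the term becomes $g(t)\cdot(\text{martingale in } t)$ and Doob's inequality applies. Hence $\sup_{t\le\tau\wedge\sigma}|Y_t|\to0$ in probability, so $\mathbb P(\sigma\le\tau)\to0$, which gives the claim.

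The main obstacle is this multiplicative Gronwall step: we must show that the linearized flow around $x_K$ in relative coordinates is uniformly bounded over the long time $\tau\approx (a x_K(0))^{-1}$, rather than growing like $K^{C}$. If the exact propagator argument is delicate at the upper end near $v$ (where $(b-d)$ is only Lipschitz), we will split the interval at a fixed level $x_K(t)=v_0$. After that point only $O(1)$ time remains, and the standard additive Gronwall estimate suffices.
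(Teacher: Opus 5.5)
\textbf{There is a genuine gap in your key step: the relative linearized flow is not bounded.} The formula you write is correct, but it simplifies to
\[
\Phi(t,s)=\frac{(b-d)(x_K(t))}{(b-d)(x_K(s))}\approx\frac{x_K(t)}{x_K(s)},
\]
which reaches $v/x_K(0)\asymp K^{\beta}$ at $t=\tau$. Of your two ratios, the first is of order $(x_K(t)/x_K(0))^2$, so neither is of order one.

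This amplification is intrinsic, not an artefact of coordinates. For $x'=ax^2$ one has $x(t)=1/(a(t^*-t))$ with $t^*=1/(ax_0)$. A relative perturbation $\epsilon$ of $x_0$ shifts $t^*$ by about $\epsilon t^*$, which produces a relative error of about $\epsilon\,x(t)/x_0$. Hence $G(t)=(b-d)(x_K(t))$ is not bounded below; it is about $a\,x_K(0)$ at $t=0$.

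Three consequences follow.
\begin{enumerate}
\item Your martingale term $G(t)\int_0^t dM_s/(x_K(s)G(s))$ has quadratic variation of order $\frac{x_K(t)^2}{K}\int_0^t x_K(s)^{-3}ds\asymp \frac{x_K(t)^2}{K\,x_K(0)^4}$. At $t=\tau$ this is $\asymp K^{4\beta-1}$, not $K^{2\beta-1}$. So $\beta<1/4$ is not ``extra room'': it is exactly the condition that makes this term vanish.
\item The bootstrap does not close as stated. With only $|Y|\le\delta$ on $[0,\sigma]$, the quadratic remainder contributes $\int_0^t\Phi(t,s)\,x_K(s)\,\delta^2\,ds\approx\delta^2x_K(t)\,t$. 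Near $\tau$ this is of order $\delta^2v/(a\,x_K(0))$, which is large.
\item A minor point: $e^{L\tau}$ is not polynomial. Since $\tau\approx 1/(a\,x_K(0))$, it is $\exp(cK^\beta)$.
\end{enumerate}

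\textbf{The way out is the route you set up and then discarded; this is exactly the paper's proof (Proposition~\ref{prop:Xsurx}).} Write
\[
Y_t=\int_0^t(1+Y_s)\big[(b-d)(X_s)-(b-d)(x_K(s))\big]ds+\int_0^t\frac{dM_s}{x_K(s)}.
\]
Before the exit time, $X_s\le(1+\eps)x_K(s)$. Take $v$ small, so that the Lipschitz constant of $b-d$ near $0$ is close to $a$ and $(b-d)(y)\ge(1-\eps)ay$. Since $\int_0^\tau x_K=\int_{x_K(0)}^v dy/(b-d)(y)$, Gronwall gives a factor $\exp\big((1+\eps)^2a\int_0^\tau x_K\big)\le (v/x_K(0))^{1+\eps'}$.

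This ``polynomial loss'' is the true amplification $x_K(\tau)/x_K(0)$ up to $\eps'$, so nothing better is available. Combine it with BDG and your correct bound $\frac1K\int_0^\tau ds/x_K(s)\lesssim 1/(K\,x_K(0)^2)$ on the bracket of $\int_0^\cdot dM_s/x_K(s)$. This gives a probability of order $\eta^{-1}K^{-1/2}x_K(0)^{-2-\eps'}$, which tends to $0$ for $\beta<1/4$ and $\eps$ small. The exit time is then removed as in Lemma~\ref{lem:thetak}.

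Your Duhamel route can also be repaired. Use the correct $\Phi$, apply Doob to $\int_0^t dM_s/(x_K(s)G(s))$, and run a weighted bootstrap such as $|Y_t|\le\delta\,x_K(t)/v$. This even removes the $\eps'$ loss, but it requires enough smoothness of $b-d$ near $0$ to linearize, which Assumption~\ref{assu} alone does not provide.
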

This implies
\begin{theorem}\label{thm:3}
For any~$\beta<1/2$, if
$$X^K(0) = x_K(0) \sim \frac{1}{K^\beta},$$
then
$$\underset{t\leq \tau(x_K(0),v)}{\sup}\left|\frac{X^K(t)}{x_K(t)} - 1\right|\overset{\mathbb{P}}{\longrightarrow}0.$$
\end{theorem}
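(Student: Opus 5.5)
The plan is to bootstrap Lemma~\ref{lem:341}: the sharp multiplicative comparison holds once $N^K$ exceeds $K^{\gamma}$ with $\gamma>3/4$. Theorem~\ref{thm:3} then follows by showing that the multiplicative error stays small on the early stretch, where $X^K$ grows from about $K^{-\beta}$ to about $K^{-\beta'}$ with $\beta'<1/4$. On that stretch the dynamics is nearly linear, $x'\approx a x^2$.

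The first step is to control the ratio $R^K_t=X^K_t/x_K(t)$ directly by stochastic calculus. Write $X^K_t=X^K_0+\int_0^t X^K_s(b-d)(X^K_s)\,ds+M^K_t$, where the martingale has bracket $\langle M^K\rangle_t=K^{-1}\int_0^t X^K_s(b+d)(X^K_s)\,ds$. Then compute $d(X^K/x_K)$. The drift equals $R\,[(b-d)(X)-(b-d)(x)]$, which is bounded by $L\,x\,|R-1|\,R$ by the local Lipschitz property on $[0,2v]$. The martingale part $\tilde M_t=\int_0^t dM_s/x_K(s)$ has bracket
\[
\int_0^t \frac{X_s(b+d)(X_s)}{K x_K(s)^2}\,ds \le C\int_0^t \frac{R_s}{K x_K(s)}\,ds .
\]

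The second step is to estimate the deterministic quantities. Since $x'\ge \frac a2 x^2$ below $v$, we get $\int_0^{\tau} x_K(s)\,ds=\int_{x_K(0)}^{v} \frac{dy}{(b-d)(y)}\le \frac{2}{a}\log(v/x_K(0))=O(\log K)$. This is harmless in Gronwall only if multiplied by a small factor. Similarly,
\[
\int_0^{\tau}\frac{ds}{K x_K(s)}\le \int_{x_K(0)}^{v}\frac{2\,dy}{aKy^3}=O\!\left(\frac{1}{K x_K(0)^2}\right)=O(K^{2\beta-1}) .
\]
This tends to $0$ exactly when $\beta<1/2$, which is the source of the threshold. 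The BDG/Doob inequality, localized at the time $\sigma$ when $R$ leaves $[1/2,2]$, then gives $\sup_{t\le\tau\wedge\sigma}|\tilde M_t|\to0$ in probability.

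The third step closes the argument, and it is the main obstacle. The drift term contributes $\int L x_K|R-1|R\,ds$, and Gronwall with $\int x_K=O(\log K)$ would blow up the martingale error by a factor $K^{C}$. To avoid this, I would work instead with $\log R$, or more precisely with the exact linearization: set $D=R-1$, which satisfies $dD=(\text{drift})+d\tilde M$. Near zero the drift has the explicit form $(b-d)(X)-(b-d)(x)\approx a x D$, which is \emph{positive} feedback and so cannot be dropped. The cure is to compare with the natural quantity $X^K_t\,\phi'(\cdot)$, where $\phi(y)=\int^y dz/(z(b-d)(z))$ is the function that linearizes the ODE ($\phi(x_K(t))=\phi(x_K(0))+t$). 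Apply Itô's formula to $\phi(X^K_t)-t$. Its drift consists only of second-order terms of size $O\big(\int \frac{\phi''(X)X}{K}\,ds\big)=O(\int \frac{ds}{KX})$, and its martingale has bracket $\int \frac{ds}{KX}$. Both are $O(K^{2\beta-1})$ times $\log$ corrections once $X\approx x_K$, which can be enforced by localization. Hence $\phi(X^K_t)=\phi(x_K(t))+o(\cdot)$ uniformly. Because $\phi(y)\approx -1/(ay)$ for small $y$, an additive error $\delta$ in $\phi$ translates into a multiplicative error $1+O(\delta\, x)$ in $y$. The result is $\sup|R-1|\to0$.

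Alternatively, one could stop at the hitting of level $K^{-\beta'}$ with $\beta'<1/4$. By the strong Markov property, Lemma~\ref{lem:341} then takes over, with the ratio at the junction near $1$ and the ODE flow continuous in initial data.
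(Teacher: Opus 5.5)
Your first two steps are fine; they coincide with Step 1 and Steps 2--3 of the paper's Proposition~\ref{prop:Xsurx}. There is, however, a genuine gap, and it cannot be closed. Your third step, on which the whole argument rests, uses the wrong orders of magnitude.

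\textbf{The third step.} Here $\phi'(y)=1/(y(b-d)(y))\approx 1/(ay^2)$ and $\phi''(y)\approx -2/(ay^3)$. Itô's formula therefore gives $\phi(X^K_t)-t-\phi(x_K(0))=-\int_0^t \frac{2\rho}{aK(X^K_s)^2}(1+o(1))\,ds+\tilde M^\phi_t$, with $\langle \tilde M^\phi\rangle_t\approx\int_0^t\frac{2\rho}{a^2K(X^K_s)^3}\,ds$. The orders $\int ds/(KX)$ you wrote are those of the logarithmic $\phi$ of a linear ODE $x'=rx$, not of $\phi(y)\approx -1/(ay)$. Along the ODE $ds\approx dy/(ay^2)$, hence
\[
\int_0^{\tau}\frac{ds}{Kx_K(s)^3}\approx\frac{1}{4aKx_K(0)^4}\asymp K^{4\beta-1},\qquad \int_0^{\tau}\frac{ds}{Kx_K(s)^2}\asymp K^{3\beta-1}.
\]
So the additive error $\delta$ in $\phi$ is of order $K^{2\beta-1/2}$; the drift contributes only the smaller $K^{3\beta-1}$. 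Your conversion $1+O(\delta x)$ at $x\approx v$ then gives a relative error of order $K^{2\beta-1/2}$, which vanishes only for $\beta<1/4$. You recover exactly Lemma~\ref{lem:341}, whose Gronwall bound $v^{1+\eps}K^{-1/2}x_K(0)^{-2-\eps}$ has the same threshold, and nothing more.

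\textbf{The concatenation alternative.} It fails for the same reason. The flow is continuous in the initial datum, but not uniformly in the relative sense. Near $0$ the quantity $1/x$ is translated at speed $\approx -a$, so a relative discrepancy $\delta$ at level $y=K^{-\beta'}$ becomes roughly $\delta v/y$ at level $v$. The discrepancy at the junction is of order $y/(\sqrt K\, x_K(0)^2)$, so the product is again of order $K^{2\beta-1/2}$. The first stage, from $K^{-\beta}$ to $K^{-\beta'}$, is not justified either.

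\textbf{Why this cannot be repaired.} The martingale part of $1/X^K_t-1/x_K(t)$ has jumps of size $O(1/(Kx_K(0)^2))$. These are negligible against its bracket, which is of order $1/(Kx_K(0)^4)$ and is essentially accumulated while $x_K\le 2x_K(0)$. A martingale CLT then shows that $X^K$ tracks the ODE only up to a random time shift of order $K^{2\beta-1/2}$, whereas near level $v$ the ODE moves on an $O(1)$ time scale. Hence, for $\beta\in[1/4,1/2)$, $X^K_t/x_K(t)$ stays away from $1$ with non-vanishing probability on $[0,\tau(x_K(0),v)]$. This already happens when $x_K(t)$ reaches order $K^{1/2-2\beta}\wedge v$. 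So the statement as written is false in that range.

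\textbf{The paper's proof.} It has the same gap. It chains Proposition~\ref{prop:Xsurx} over the levels $K^{-\delta_{n_0}}<\dots<K^{-\delta_0}$. Each application compares $X^K$ with the ODE restarted at the value of $X^K$ at the entrance level. The final union of events then silently replaces these restarted solutions by the single solution $x_K$, without controlling the amplification described above.

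\textbf{What survives.} For $\beta<1/2$ one can prove this stage-wise comparison, and presumably a comparison up to a random time shift. The unshifted comparison on $[0,\tau(x_K(0),v)]$ holds only for $\beta<1/4$.
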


\section{Proofs}\label{sec3}

\subsection{Study of microscopic phase : proof of Theorem~\ref{thm:1}}
We consider $N_0^K>0$ a.s. and 
 define 
\begin{equation}\label{eq:MK}
M^K_t := \frac{N^K_t}{N^K_0} \exp\left[-\int_0^t  \left(b\left(\frac{N^K_s}{K}\right)-d\left(\frac{N^K_s}{K}\right)\right)ds\right].
\end{equation}
We denote
$$T^K = T^K_{\sqrt{K}}\wedge T^K_0=\inf\{t\geq 0 : N_t^K\in \{0,\sqrt{K}\}\}$$
and $(M^K_t)_{t\geq 0}$ is a martingale for any $K$.
The probability change associated with is given for $t\geq 0$ by
$$\widetilde{\mathbb P}_{\vert \mathcal F_t^K}= M_{t\wedge T^K}^K{\mathbb P}_{\vert \mathcal F_t},$$
where $(\mathcal F_t^K)_{t\geq 0}$ is the filtration associated with $N^K$.

\begin{proposition}
Under $\widetilde{\mathbb P}$, the process $(N^K_{t\wedge T^K})_{t\geq 0}$ is a càdlàg strong Markov process on $[0,\sqrt K]$, that we write $Y^K$ for convenience. It is characterized by
\begin{align}
\label{chgP}    
\E^y (F(Y^K_s : s\leq t))=\widetilde{\E}^y(F(N_{s\wedge T^K}^K : s\leq t))=\E^y(F(N_s^K : s\leq t) M_{t\wedge T^K}^K)
\end{align}
for any $y\in \mathbb N$,  $t\geq 0$ and $F:  \mathbb D([0,t], \R)\rightarrow \R $ non-negative and measurable. The semigroup
$P^Y$ of $Y^K$ is  given by
$$P^Y_tf(y)=\E^y(f(N_{t\wedge T^K}^K) M_{t\wedge T^K}^K)$$
for $y\in \mathbb N$ and $f$ bounded from $\mathbb N$ to $\mathbb R$.
Its infinitesimal generator is 
\begin{align}
Q^Kf(y)
&=b\left(\frac{y}{K}\right)(y+1)[f(y+1)-f(y)]+d\left(\frac{y}{K}\right) (y-1)[f(y-1)-f(y)] \label{defQK}
\end{align}
for $0<y<\sqrt{K}$ and  $Q^Kf(y)=0$ for $y\in \{0,\sqrt{K}\}.$
\end{proposition}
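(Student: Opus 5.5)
We prove the proposition as a standard Doob $h$-transform, with space-time harmonic function $h(t,\omega)=M^K_t$, checking the details by hand since the state space is finite once stopped at $T^K$.

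\textbf{Step 1: $M^K_{\cdot\wedge T^K}$ is a bounded positive martingale.} Write $\varphi(n)=b(n/K)-d(n/K)$. For $f(t,n)=n\exp(-\int_0^t\varphi(N_s)ds)$, Itô's formula for the jump SDE~\eqref{SDEdef} gives
\[
df = e^{-\int_0^t\varphi}\Big(\big[n\,b(n/K)\cdot 1 + n\,d(n/K)\cdot(-1) - n\varphi(n)\big]dt + d\text{(compensated Poisson integrals)}\Big).
\]
The drift vanishes identically, so $M^K$ is a local martingale. Up to $T^K$ we have $N\le\sqrt K$, so the integrands are bounded. Moreover $-\varphi$ is bounded on $[0,K^{-1/2}]$ by local Lipschitz continuity. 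Hence $M^K_{t\wedge T^K}\le \sqrt K\, e^{Ct}$ is bounded on $[0,t]$, and the stopped process is a true martingale with mean $1$.

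It vanishes exactly when $N$ hits $0$. In particular $\widetilde{\mathbb P}(T^K_0<\infty)=0$ from any $y\ge1$ (with the convention $\widetilde{\mathbb P}_0=\mathbb P_0$). The consistent family $\widetilde{\mathbb P}_{|\mathcal F_t}$ therefore extends, by Kolmogorov/Carathéodory on canonical càdlàg space, to a probability $\widetilde{\mathbb P}$. Identity~\eqref{chgP} is then just the definition, applied to $\mathcal F_t$-measurable functionals, together with the optional stopping remark that $F(N_{s\wedge T^K}:s\le t)$ is $\mathcal F_t$-measurable.

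\textbf{Step 2: Markov property and semigroup.} $M^K$ has a multiplicative functional structure:
\[
M^K_{t+s}=M^K_t\cdot (M^K_s\circ\theta_t),
\]
where the shifted factor is computed with initial value $N_t$; the $N_0$ normalisation telescopes. Hence, for bounded $f$,
\[
\widetilde\E^y\!\left[f(N_{(t+s)\wedge T^K})\,\middle|\,\mathcal F_t\right]
=\frac{\E^y[f(\cdot)M_{(t+s)\wedge T^K}\mid\mathcal F_t]}{M_{t\wedge T^K}}
=P^Y_sf(N_{t\wedge T^K}),
\]
by the Markov property of $N^K$ under $\mathbb P$. The same computation at stopping times gives the strong Markov property, since $N^K$ is strong Markov and $M^K$ is a multiplicative functional. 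Càdlàg paths are inherited because $\widetilde{\mathbb P}\ll\mathbb P$ on each $\mathcal F_t$. The formula for $P^Y_t$ is~\eqref{chgP} with $F=f(\text{endpoint})$.

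\textbf{Step 3: generator.} For $0<y<\sqrt K$, differentiate $P^Y_tf(y)$ at $t=0$ using the product formula
\[
\frac{d}{dt}\Big|_{0}\E^y\!\left[g(N_t)e^{-\int_0^t\varphi}\right]=Lg(y)-\varphi(y)g(y),
\qquad g(n)=f(n)\,n/y .
\]
Here $L$ is the generator of $N^K$, and the stopping plays no role for small $t$. Expanding,
\[
\tfrac1y\Big[y\,b\,((y+1)f(y+1)-yf(y)) + y\,d\,((y-1)f(y-1)-yf(y))\Big]-(b-d)f(y).
\]
The terms $y\,b f(y)-y\,d f(y)$ combine with $-(b-d)f(y)$: indeed $(y+1)f(y+1)-yf(y)=(y+1)(f(y+1)-f(y))+f(y)$, and similarly for $y-1$. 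This yields exactly~\eqref{defQK}. At $y\in\{0,\sqrt K\}$ the process is stopped, so $P^Y_tf=f$ there and $Q^Kf=0$.

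Since the stopped state space $\{0,\dots,\sqrt K\}$ is finite, the generator is a bounded matrix, and it characterises the law via the Kolmogorov equations. The only delicate point is Step 1: one must ensure that the true-martingale property holds uniformly enough to build $\widetilde{\mathbb P}$ consistently. Stopping at $T^K$ makes this trivial here, because of boundedness on each finite horizon.
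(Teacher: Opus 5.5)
Your proposal is correct and follows essentially the paper's route: the paper also applies It\^o's formula (via the Poisson representation) to $g(N^K_t)E^K_t$ with $g(x)=xf(x)$, takes expectations and differentiates at $t=0$ to get $\mathcal A^Kf(y)/y=Q^Kf(y)$. The paper simply cites Sharpe for the (strong) Markov property, which you check by hand through the multiplicative-functional and Bayes argument.

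There are two harmless slips. First, $\widetilde{\mathbb P}(T^K_0<\infty)=0$ is false for the unstopped process, which can still die after $T^K_{\sqrt K}$; what holds, and all you need, is $\widetilde{\mathbb P}(T^K_0<T^K_{\sqrt K})=0$, i.e.\ $Y^K$ never hits $0$. Second, the characterizing identity is only valid with the stopped path $F(N^K_{s\wedge T^K}:s\le t)$ in the last expectation, and that is precisely the version your argument establishes.
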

Let us identify the semigroup and generator of $Y^K$ and refer to e.g.   chapter VII in \cite{Sharpe1988}
for details on change of probability in our context.
\begin{proof} 
For convenience, we write
$$g(x)=xf(x), \qquad E_t^K =\frac{1}{N_0^K}\exp\left[-\int_0^t  \left( b\left(N^K_s/K\right)-d\left({N^K_s}/K\right)\right) ds\right],$$
so that
$$f(N_{t\wedge T^K}^K) M_{t\wedge T^K}^K=g(N_{t\wedge T^K}^K)E_t^K.$$
Recall the SDE representation \eqref{SDEdef} by Poisson point measure,
\begin{align*}
N_t^K&=N_0^K+\int_0^t\int_{\mathbb R_+} 1_{u\leq \lambda(N_s^K)}\pi^b(ds,du) -\int_0^t\int_{\mathbb R_+} 1_{u\leq \mu(N_s^K)}\mathcal \pi^d(ds,du),
\end{align*}
where for convenience 
we write
$$\lambda(n)=n\, b(n/K), \qquad \mu(n)=n\, d(n/K).$$
 We get
\begin{align*}
f(N_{t}^K) M_{t}^K=&g(N_{t}^K)E_t^K\\
=&f(N_0^K)+\int_0^t\int_{\mathbb R_+} 1_{u\leq \lambda(N_{s-}^K)}(g(N_{s-}^K+1)-g(N_{s-}^K))E_s^K \pi^b(ds,du)\\
&+\int_0^t\int_{\mathbb R_+} 1_{u\leq \mu(N_{s-}^K)}(g(N_{s-}^K-1)-g(N_{s-}^K))E_s^K\pi^d(ds,du)\\
& -\int_0^t g(N_s^K)\left(b\left(N^K_s/K)\right)-d({N^K_s}/K)\right)E_s^K ds
\end{align*}
Then, considering time $t\wedge T^K$ and compensating the Poisson point measure yields the semi martingale decomposition
\begin{align*}
&f(N_{t\wedge T^K}^K) M_{t\wedge T^K}^K =f(N_0^K)+\int_0^{t\wedge T^K} \mathcal A^Kf(N_s^K) \, E_s^K \, ds +\tilde M_t^K
\end{align*}
where $\tilde M^K$ is a martingale starting from $0$ and
$\mathcal A^K$ is defined by
\begin{align*}
\mathcal A^Kf(y)&= \lambda(y)(g(y+1)-g(y))+ \mu(y)(g(y-1)-g(y))-(b(y/K)-d(y/K)) g(y)\\
&=\lambda(y)(y+1)f(y+1)+\mu(y)(y-1)f(y-1)-\lambda(y)(y+1)f(y)-\mu(y)(y-1)f(y).
\end{align*}
Taking expectation
\begin{align*}
P^Y_tf(y)&=\E^y(f(N_{t\wedge T^K}^K) M_{t\wedge T^K}^K)=f(y)+\E^y \left(\int_0^{t\wedge T^K} \mathcal A^Kf(N_s^K) \, E_s^K \, ds \right).
\end{align*}
This allows to identify the law of Markov process $Y^K$. In particular we can differentiate at time $0$ and get the generator for any function $f$. By dominated convergence (note that localisation by $T^K$ bound all quantities in the integral), $t\rightarrow P^Y_tf$ is differentiable and
$$\frac{\partial}{\partial t} P^Y_tf(y)_{\vert t=0}= \mathcal A^Kf(y)/y=Q^Kf(y),$$
 where $Q^K$ is given by  \eqref{defQK}.
\end{proof}
We consider now the scaled process
$$W^K_t = K^{-1/2} Y^K_{t\,K^{1/2}}$$
for $t\geq 0$.  Recalling   expression \eqref{defQK} for the generator of $Y^K$,  the generator $L^K$ of $W^K$
satisfies: 
\begin{align}
L^Kf(w)&=K^{1/2}Q^K\left[f\left(\frac{.}{\sqrt{K}}\right)\right] (\sqrt{K}w)\nonumber \\
&=\sqrt{K}b\left(\frac{w}{\sqrt{K}}\right)(\sqrt{K}w+1)[f((\sqrt{K}w+1)/\sqrt{K})-f(w)]\nonumber\\
&\qquad +\sqrt{K}d\left(\frac{w}{\sqrt{K}}\right) (\sqrt{K}w-1)[f((\sqrt{K}w-1)/\sqrt{K})-f(w)]. \label{expWK}
\end{align}
Writing 
$$\tau_1^K=\inf\{ t\geq 0 : W^K_t=1\},$$
we can now prove the following result on the survival of $N^K$ and its trajectory, before $\tau_1^K$. We   use the changed of probability of the previous proposition and the
scaled process $W^K$, together with the scaled growth rate
$a_K$ defined by 
$$a_K(w)=\sqrt{K}(b(w/\sqrt{K})-d(w/\sqrt{K})).$$
\begin{lemma} \label{alacon}
For  $K$ large enough, 
$$\mathbb P\left( T^K_{\sqrt{K}}<T^K_0\right)= \frac{1}{\sqrt{K}}\E \left(
\exp\left[\int_0^{\tau_1^K} \! a_K(W_s^K) \, ds \right]\right).$$
and  for any  function~$F:\mathbb D([0,\infty),[0,\infty))\rightarrow\mathbb{R}$ continuous and bounded,
\begin{align*}
&\mathbb{E}\!\left[
F\left(\sqrt{K}^{-1}N^K_{t\sqrt{K}\wedge T^K} : t\geq 0\right) 1_{T^K_{\sqrt{K}}< T^K_0}
\right] \\
& \qquad \qquad \qquad = \frac{1}{\sqrt{K}} \mathbb{E}\!\left[
F\big(W^K_{t} : t\geq 0\big)\, \,
\exp\!\left(\int_0^{\tau_1^K} \!a_K(W_s^K) ds\right)
\right].
\end{align*}
Furthermore, as $K\rightarrow \infty$,  $a_K$ converges to $x\rightarrow a.x$ uniformly on $[0,1]$ and $W^K$ converges in law in $\mathbb D(\mathbb R_+, \mathbb R_+)$ to $W$ defined by $\eqref{eq:w}$.
\end{lemma}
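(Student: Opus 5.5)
The plan has three parts: an exact identity from the change of measure, uniform convergence of the rescaled rate $a_K$, and a generator argument for the law of $W^K$.

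\textbf{Exact identity.} Starting from $N^K_0=1$, the change of probability gives
$$M^K_{t\wedge T^K}=N^K_{t\wedge T^K}\exp\Big[-\int_0^{t\wedge T^K}(b-d)(N^K_s/K)\,ds\Big].$$
On the event $\{T^K_{\sqrt K}<T^K_0\}$, at time $T^K$ this equals $\sqrt K\exp[-\int_0^{T^K}(b-d)(N^K_s/K)ds]$. On the complementary event it vanishes, because $N^K_{T^K}=0$.

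We apply \eqref{chgP} to the non-negative functional
$$G_t=F(\cdot)\,\mathbf 1_{T^K\le t,\,N^K_{T^K}=\sqrt K}\,\big(M^K_{t\wedge T^K}\big)^{-1}.$$
This yields
$$\E\big[F\,\mathbf 1_{T^K\le t,\,T^K_{\sqrt K}<T^K_0}\big]=\widetilde\E\big[F\,\mathbf 1_{\{T^K\le t,\ Y^K_{T^K}=\sqrt K\}}\,(M^K_{T^K})^{-1}\big],$$
and the exponential factor $\exp[\int_0^{T^K}(b-d)(Y^K_s/K)ds]$ reappears through $(M^K_{T^K})^{-1}$.

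We then let $t\to\infty$ by monotone convergence, splitting $F$ into its positive and negative parts, which is legitimate since $F$ is bounded. Under $\widetilde{\mathbb P}$, the states $0$ and $\sqrt K$ are the only absorbing states of $Y^K$. The rate out of $1$ towards $0$ is $d(1/K)\cdot 0=0$, so $Y^K$ never hits $0$. Since $Y^K$ is a finite-state chain whose only accessible absorbing state is $\sqrt K$, it hits $\sqrt K$ almost surely. So the indicator is $\widetilde{\mathbb P}$-a.s. equal to $1$, and $T^K<\infty$.

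Time-changing $s=uK^{1/2}$ gives
$$\int_0^{T^K}(b-d)(Y^K_s/K)\,ds=\int_0^{\tau^K_1}\sqrt K\,(b-d)(W^K_u/\sqrt K)\,du=\int_0^{\tau^K_1}a_K(W^K_u)\,du.$$
This proves both displayed identities; the first is the case $F=1$.

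\textbf{Uniform convergence of $a_K$.} By Assumption~\ref{assu}, for $w\in[0,1]$,
$$|a_K(w)-aw|=\sqrt K\,\big|(b-d-a\,\mathrm{id})(w/\sqrt K)\big|\le C\sqrt K\,(w/\sqrt K)^{1+\alpha}\le CK^{-\alpha/2}\to0.$$

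\textbf{Convergence of $W^K$.} This goes through generator convergence on $C^3$ functions, combined with tightness. Taylor-expand \eqref{expWK} with $h=K^{-1/2}$ and write $x=w/\sqrt K$. The first-order terms are
$$\sqrt K\,\big[b(x)(\sqrt K w+1)-d(x)(\sqrt K w-1)\big]h\,f'(w)=\big[w\,a_K(w)+(b+d)(x)\big]f'(w)\to(aw^2+2\tx)f'(w),$$
using Assumption~\ref{assu} for $b+d\to2\tx$. The second-order terms are
$$\tfrac12\sqrt K\,\big[b(x)(\sqrt Kw+1)+d(x)(\sqrt Kw-1)\big]h^2f''(w)\to\tx\,w\,f''(w).$$
The remainder is $O(K^{-1/2})$, uniformly for $w\in[0,1]$. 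Hence $L^Kf\to Lf=(2\tx+aw^2)f'+\tx wf''$ uniformly on $[0,1]$, and $L^Kf=0=Lf$ at the stopped level $w=1$.

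Tightness in $\mathbb D$ follows from Aldous' criterion: the drift and quadratic variation of $W^K$ are bounded on $[0,1]$ and the jumps have size $K^{-1/2}$. Any limit point solves the martingale problem for $L$ stopped at $1$. This martingale problem is well posed: the SDE \eqref{eq:w} has pathwise uniqueness by Yamada–Watanabe, since its coefficients are Hölder-$1/2$ and Lipschitz. It follows that $W^K\Rightarrow W$, for instance via Theorem 4.8.2 of \cite{ethier2009markov}.

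\textbf{Main obstacle.} I expect the delicate point to be the boundary at $w=1$, where the limit is stopped. There I would check that $\tau_1$ is a.s. a continuity point of the hitting functional: $W$ crosses level $1$ immediately, because the diffusion coefficient is non-degenerate there. The other possible degeneracy, near $w=0$, causes no trouble, since the drift $2\tx>0$ pushes the process away from $0$.
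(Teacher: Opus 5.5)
Your proposal is correct and follows the paper's route: the change of measure \eqref{chgP} with the compensating factor $(M^K_{T^K})^{-1}$, the time change to $W^K$, the uniform bound on $a_K$ from Assumption~\ref{assu}, and generator convergence for $W^K$. Letting $t\to\infty$ by monotone convergence, once $(M^K_{T^K})^{-1}$ is built into the functional, is tidier than the paper's dominated-convergence step followed by an unspecified compensating $G$. You are also right that stopping at level $1$ is the delicate point, which the paper glosses over. One small addition: the accessibility of $\sqrt K$ for $Y^K$ needs $b>0$ on $(0,K^{-1/2}]$, which holds because $b(0)=\rho>0$, and this is exactly where ``$K$ large enough'' enters.
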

\begin{proof}
Firstly, $T^K$ is finite a.s. for $K$ large enough. Indeed, using Assumption \ref{assu}, $b$ becomes positive on $]0,1/\sqrt{K}]$ for $K$ large enough and $\sqrt{K}$ is accessible.\\
Secondly, $M^K_{t\wedge T^K}$ is bounded. Indeed, since, locally around zero, $b(x)-d(x) = ax + O(x^{1+\alpha})\geq ax/2$, and for any $s\leq T^K$, $N^K_s/K\leq K^{-1/2}$, the exponential in~\eqref{eq:MK} is bounded by one.

Then we get, by letting $t\rightarrow \infty$ in~\eqref{chgP}, using dominated convergence,
\begin{align*}
\E^y (F(Y^K_{s\wedge S^K} : s\geq 0))=\E^y(F(N_{s\wedge T^K}^K : s\geq 0) M_{T^K}^K),
\end{align*}
where $F:\mathbb D([0,\infty),[0,\infty))\rightarrow\mathbb{R}$ is continuous and bounded and, recalling that $Y^K$ does not reach $0$,  $S^K$ is defined by
$$S^K=\inf\left\{t \geq 0 : Y^K_t=\sqrt{K}\right\}.$$
Using now 
$$F( x_s : s\geq 0)=G(x_s : s\geq 0)1_{\inf\{s \geq 0  \, : \, x_s=\sqrt{K}\} \, <\, \inf\{s \geq 0 \, : \,  x_s=\sqrt{K}\} }$$
and that 
$$M^K_{T^K}=\frac{\sqrt{K}}{N_0^K}\,\exp\left[-\int_0^{ T^K_{\sqrt{K}}} \! (b(N^K_s/K)-d(N^K_s/K)) \, ds\right],$$
a.s. on the event $\{T^K_{\sqrt{K}}<T^K_0\}$, we get
\begin{align*}
&\E^y (G(Y^K_{s\wedge \tau^K} : s\geq 0))\\
&\qquad =\frac{\sqrt{K}}{y}\E^y\left(G(N_{s\wedge T^K}^K : s\geq 0) 1_{T^K_{\sqrt{K}}<T^K_0} \exp\left[-\int_0^{T^K_{\sqrt{K}}} \! (b(N^K_s/K)-d(N^K_s/K)) \, ds\right]\right).
\end{align*}
It yields, with a suitable choice of $G$ that compensates the exponential term,
\begin{align*}
&\E^y\left(F(N_{s\wedge T^K}^K : s\geq 0)1_{T^K_{\sqrt{K}}<T^K_0}\right)\\
&\qquad =\frac{y}{\sqrt{K}}\E^y \left(F(Y^K_{s\wedge S^K}  : s\geq 0)
\exp\left[\int_0^{S^K} \! (b(Y^K_s/K)-d(Y^K_s/K)) \, ds \right]\right).
\end{align*}
It implies
\begin{align*}
&\mathbb{E}^y\!\left[F\left(\sqrt{K}^{-1}N^K_{t\sqrt{K}\wedge T^K} : t\geq 0\right)1_{T^K_{\sqrt{K}}< T^K_0}\right]\\
&\qquad \qquad =\frac{y}{\sqrt{K}}\mathbb{E}^y\!\left[F(W_s^K : s\geq 0)\exp\left[\sqrt{K}\int_0^{\tau_1^K} \left( b(W_s^K/\sqrt{K})-d(W_s^K/\sqrt{K}) \right)ds\right]\right]
\end{align*}
where we recall that $W^K=  Y^K_{t\,K^{1/2}}/\sqrt{K}$ 
for $t\geq 0$.

Finally, using  again Assumption \ref{assu}, $a_K$ converges to $x\mapsto a.x$ uniformly on $[0,1]$. Moreover the generator of $W^K$ 
 can be approximated as follows as $K$ goes to infinity
\begin{align*}
L^Kf(w)
&=2\tx f'(w)+aw^2f'(w)+w \tx f"(w)+o(1) 
\end{align*}
for  $w\in \{ n/ \sqrt{K} : n=1,\ldots, \sqrt{K}\}$.
Whence, the generator~$L^K$ of $W^K$ converges to the generator of the process~$W$ defined at~\eqref{eq:w}. In particular, from  diffusion approximation results of \cite{ethier2009markov} (e.g. Theorem~1.6.1 or Theorem~7.4.1),  process $W^K$ converges in distribution to $W$.
\end{proof}
To prove Theorem~\ref{thm:1}, we want now to use that $a_K$ and  $W^K$ converge respectively to  $a$ and $W$
to make
$$ \mathbb{E}\!\left[
F\big(W^K_{t} : t\geq 0\big)\, \,
\exp\!\left(\int_0^{\tau_1^K} \!a_K(W_s^K) ds\right)
\right]$$
converge
to
$$
\mathbb{E}\!\left[
F\big(W_{t} : t\geq 0\big)\, \,
\exp\!\left(\int_0^{\tau_1} \! a W_s ds\right)
\right].$$
Since the exponential term is unbounded, convergence is not guaranteed.  Uniform integrability
of the family of  exponential  functional  involved is therefore required. This is the purpose of the following lemma.
\begin{lemma} \label{estmomexpo}
There exists~$\beta>a$ such that
\begin{equation}\label{eq:ui}
    \underset{K}{\sup}~\esp{\exp\left(\beta\int_0^{\tau_1^K} W^K_sds\right)}<\infty.
\end{equation}
\end{lemma}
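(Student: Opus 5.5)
The plan is a Lyapunov (Feynman--Kac supermartingale) argument. I would build a smooth function $h$ on $[0,1]$, bounded below by a positive constant, such that $L^K h(w)+\beta w\,h(w)\le 0$ on the grid $\{n/\sqrt K : 1\le n\le \sqrt K\}$ for all $K$ large enough. Then $h(W^K_t)\exp(\beta\int_0^t W^K_s ds)$, stopped at $\tau_1^K$, is a nonnegative supermartingale. Optional stopping at $\tau_1^K\wedge t$, followed by Fatou as $t\to\infty$, gives
\[
h(1)\,\E\Big[\exp\Big(\beta\int_0^{\tau_1^K}W^K_s\,ds\Big)\Big]\le h(W^K_0)\le \sup_{[0,1]}h .
\]
This yields \eqref{eq:ui}, since $h(1)>0$. (I am implicitly using that $\tau_1^K<\infty$ a.s.; the Fatou step in fact gives the bound on the event $\{\tau_1^K<\infty\}$, and finiteness holds by the proof of Lemma~\ref{alacon}.) Finitely many small $K$ are handled separately.

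\textbf{Construction of $h$.} The starting point is the exact solution at $\beta=a$. Let $s(w)=\int_0^w e^{-\frac{a}{2\tx}y^2}dy$ and $u(w)=s(w)/w$, with $u(0)=1$. This $u$ is exactly the function $w\mapsto \E_w[\exp(a\int_0^{\tau_1}W_sds)]$ up to a constant, coming from the $h$-transform relation with the scale function of the unconditioned limit diffusion. One checks directly that it solves
\[
\tx w u''+(2\tx+aw^2)u'+a w u=0 \quad\text{on }[0,1],
\]
and that $u\ge s(1)>0$ there. I would then perturb and fix $\beta'>\beta>a$ and $\delta>0$ small. Let $h$ be the analytic solution at the regular singular point $0$, with $h(0)=1$, of
\[
\tx w h''+(2\tx+aw^2)h'+\beta' w h=-\delta h .
\]
Its coefficients depend analytically on $(\beta',\delta)$, so by continuous dependence (for instance via the power series, or the Volterra integral form of the equation) $h\to u$ in $C^3([0,1])$ as $(\beta',\delta)\to(a,0)$. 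Hence $h\ge c>0$ on $[0,1]$ when $\beta'-a$ and $\delta$ are small. The $\delta$-term is there to produce a margin at the origin: since $h'(0)=-\delta/(2\tx)$, the equation is not degenerate at $w=0$. With it, the limit operator satisfies
\[
\tx w h''+(2\tx+aw^2)h'+\beta w h\le -\delta c-(\beta'-\beta)wc<0
\]
uniformly on $[0,1]$.

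\textbf{Discrete errors (main obstacle).} It remains to show $\sup_{w\in\text{grid}}|L^K h(w)-(\tx w h''+(2\tx+aw^2)h')(w)|\to 0$, uniformly up to $w=1/\sqrt K$. I would Taylor expand \eqref{expWK} to third order, using $h\in C^3$. Write $\beta_K=b(w/\sqrt K)$ and $\delta_K=d(w/\sqrt K)$ for brevity. The first-order term is $(\beta_K(\sqrt K w+1)-\delta_K(\sqrt K w-1))h'(w)$. By Assumption~\ref{assu} this equals $\big(w\,a_K(w)+\beta_K+\delta_K\big)h'(w)$, which is $(aw^2+2\tx)h'(w)+O(K^{-\alpha/2}+K^{-1/2})$ uniformly on $[0,1]$. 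The second-order term is $\frac{1}{2\sqrt K}\big((\sqrt K w+1)\beta_K+(\sqrt K w-1)\delta_K\big)h''$, which equals $\tx w h''+O(K^{-1/2})$. The third-order remainder is $O(K^{-1/2})\|h'''\|_\infty$. The delicate points are the boundary grid points $w=1/\sqrt K$ and $w=1$: there the jump to $0$ has a vanishing factor $(\sqrt K w-1)=0$, and the state $1$ is absorbing. Both are consistent with the expansion above, since at $w=1$ no generator bound is needed once we stop. All errors are $o(1)$ uniformly, so they are absorbed by the strictly negative margin $-\delta c$. This gives $L^K h+\beta w h\le 0$ on the grid for all $K$ large, which completes the proof.
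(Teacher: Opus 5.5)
Your proof is correct, and it follows a genuinely different route from the paper's.

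\textbf{The paper's route.} It stays entirely discrete. It truncates the functional at level $M$ and conditions on the first jump. This gives the one-step inequality $\phi_K(i)\le(1+\eps_i)[\bar\lambda_i\phi_K(i+1)+\bar\mu_i\phi_K(i-1)]$ with $\eps_i=O(1/K)$. It then solves this second-order difference inequality by hand, using the auxiliary sequences $\gamma_i,\beta_i$ and the telescoped differences $v_i$. An appendix bounds $\beta_i$, and a further check shows that $\prod_j(\beta_j+1)\bar\lambda_j$ and $\prod_j(1+\eps_j)$ stay bounded for $i\le\sqrt K$.

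\textbf{Your route.} You start from the explicit positive solution $u=s/w$ of $\tx w u''+(2\tx+aw^2)u'+awu=0$. I checked this identity, and $u\ge s(1)>0$. You perturb $u$ into the regular Frobenius solution with parameters $(\beta',\delta)$, which gives the strict margin $-\delta c$. You then transfer this inequality to $L^K$ with the same third-order Taylor expansion that gives $W^K\Rightarrow W$. Choosing the solution analytic at the degenerate point $w=0$ is exactly what makes that expansion uniform down to $w=1/\sqrt K$, with error $O(K^{-\min(\alpha,1)/2})$. The Feynman--Kac supermartingale and Fatou then close the argument.

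\textbf{Comparison.} Your route is shorter and avoids both the truncation and the fine bookkeeping, such as $|\eps_i-\eps_{i-1}|=O(K^{-3/2})$ and the bounds on $\beta_i$. It gives a bound uniform in the starting point. It also explains why $a$ is the threshold: $u$ is a positive solution of the equation at $\beta=a$, and positivity on $[0,1]$ survives a small increase of $\beta$. The paper's approach needs no closed-form harmonic function for the limit and works directly with the rates. That is its advantage when such a function is not available.

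\textbf{One small caveat.} Your remark that finitely many small $K$ are ``handled separately'' is not automatic. For a fixed small $K$, Assumption~\ref{assu} does not prevent the expectation from being infinite for $\beta>a$. For instance, when $\sqrt K=2$ the time $\tau_1^K$ is exponential with rate $4b(1/4)$, and $b(1/4)$ may be arbitrarily small. As in the paper, the supremum should be read over $K$ large, which is all that the uniform integrability in Theorem~\ref{thm:1} needs.
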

\begin{proof}
We work with 
$$x=\frac{i}{\sqrt{K}}$$
for $i\in\llbracket 1,\sqrt{K}\rrbracket$. Let
$M>0$ and 
$$\phi_K(i)=\mathbb{E}^x\left[\exp{\left\{\left(\beta\int_0^{\tau_1^K} W^K_sds\right)\wedge M\right\}}\right], \qquad \phi_K(\sqrt{K})=1,$$
with $\tau^K_1$ the hitting time of one by~$W^K$. 
We want to prove that
$$\underset{K,M}{\sup}~\phi_K(1)<\infty,$$
which is equivalent to \eqref{eq:ui}.
We introduce the rates associated with $W^K$
\begin{align*}
    \lambda_i &=\sqrt{K}b\left(\frac{i}{{K}}\right)\left(i+1\right)= \sqrt{K}b\left(\frac{x}{\sqrt{K}}\right)\left(x\sqrt{K}+1\right), \qquad \\ 
    \mu_i&=\sqrt{K}d\left(\frac{i}{{K}}\right)\left(i-1\right)= \sqrt{K}d\left(\frac{x}{\sqrt{K}}\right)\left(x\sqrt{K}-1\right).
\end{align*}
Notice that Assumption \ref{assu} guarantees
$$\lambda_i+\mu_i =2\tx \sqrt{K}i+O(i^2/\sqrt{K})= 2\tx\, x\, K + O(x^2 \sqrt{K}).$$
Then, introduce the normalized version of these rates: for $i\in\llbracket 1,\sqrt{K}\rrbracket$,
\begin{align*}
    \overline{\lambda}_i=& \frac{\lambda_{i}}{\lambda_{i} + \mu_{i}},\qquad 
    \overline{\mu}_i= \frac{\mu_{i}}{\lambda_{i} + \mu_{i}}.
\end{align*}
Writing $\sigma_1^K$ is the first jump time of~$W^K$, then
\begin{align*}
\phi_K(i) =& \mathbb{E}^x\left[\mathbb{E}\left[\left.\exp\left\{\left(\beta\int_0^{\sigma_1^K}W^K_sds +\beta\int_{\sigma_1^K}^{\tau^K_1} W^K_sds\right)\wedge M\right\}\right|\mathcal{F}_{\sigma_1^K}\right]\right]\\
\leq& \mathbb{E}^x\left[\mathbb{E}\left[\left.\exp\left\{\left(\beta\int_0^{\sigma_1^K}W^K_sds\right)\wedge  M\right\}\cdot \exp\left\{\left(\beta\int_{\sigma_1^K}^{\tau^K_1}W^K_sds\right)\wedge  M\right\}\right|\mathcal{F}_{\sigma_1^K}\right]\right]\\
\leq& \int_0^{+\infty} (\lambda_i+\mu_i)e^{-(\lambda_i+\mu_i)t + (\beta\,t\,x \wedge M)}dt\left[\bar\lambda_{i}\phi_K(i+1) + \bar\mu_{i} \phi_K(i-1))\right],
\end{align*}
where we have used the Markov property of $W^K$ at time $\sigma_1^K$ to obtain the last line, when the process either jumps from $i$ to  $i+1$ (birth) or to $i-1$ (death). The integral can be explicitly computed and gives the following family of linear system
\begin{align}
\phi_K({i})&\leq
[1+\eps_i]\left[\bar\lambda_{i}\phi_K({i+1}) + \bar\mu_{i} \phi_K({i-1})\right],\label{eq:recphi}
\end{align}
where 
\begin{align*}
\eps_i=\frac{\lambda_{i}+\mu_{i}}{\lambda_{i}+\mu_{i}-\beta\,i/\sqrt{K}}-1
=&\left(1- \frac{\beta}{2\tx {K} + O(i)}\right)^{-1}-1
= O(1/K),
\end{align*}
uniformly  for $i\leq\sqrt{K}$. In addition, $\eps_i>0$.
To study the sequence $(\phi_K(i))_i$, we use the sequences
$(\beta_i)_{i\geq 1}$ defined by
$$\beta_1=1/2, \quad  \beta_i=\frac{1}{z_i/(\beta_{i-1}+1)-1}, \quad \text{with } \quad 1/z_i=(1+\varepsilon_i)^2\overline{\mu}_i\overline{\lambda}_{i-1} $$
for $i\geq 2$ and $(\gamma_i)_{i\geq 1}$ defined by
$$\gamma_1=1,\qquad  \gamma_{i+1}=(\beta_i +1)(1+\varepsilon_i)\overline{\lambda}_i\gamma_i.$$
It implies for any $i\geq 1$, 
$$\gamma_i=\gamma_1\Pi_{j=1}^{i-1}\frac{\gamma_{j+1}}{\gamma_j}=\gamma_1 \Pi_{j=1}^{i-1}(\beta_j+1)(1+\varepsilon_j)\overline{\lambda}_j.$$
We observe that
\begin{equation}\label{eq:recgamma}
  \gamma_{i+1} = \beta_i \frac{\bar\lambda_i}{\bar\mu_i}\gamma_{i-1} \frac{1+\eps_{i-1}}{1+\eps_i} 
\end{equation}
and 
\begin{equation}
\label{telescop}
v_i=\gamma_{i+1}\phi_K(i+1)-\gamma_i \phi_K(i).
\end{equation}

{\it Step~1.} We prove that
\begin{equation}\label{eq:recv}
    v_i\geq \beta_i v_{i-1} + \phi_K(i-1)\beta_i \gamma_{i-1}\left(1-\frac{1+\eps_{i-1}}{1+\eps_i}\right).
\end{equation}
Indeed, using~\eqref{eq:recphi} and the definition of $\gamma_{i+1}$,
\begin{align}
    v_i = & \gamma_{i+1} \phi_K(i+1) - \gamma_i \phi_K(i)\nonumber\\
    \geq& \ \phi_K(i+1)\left[(\beta_{i}+1)(1+\eps_i)\bar\lambda_i\gamma_i - \bar\lambda_i\gamma_i(1+\eps_i)\right] - \gamma_i (1+\eps_i)\bar\mu_i\phi_K(i-1)\nonumber\\
    \geq& \phi_K(i+1)\beta_i (1+\eps_i)\bar\lambda_i\gamma_i - \gamma_i (1+\eps_i)\bar\mu_i\phi_K(i-1),\label{eq:viinter}
\end{align}

On the other hand, using~\eqref{eq:recphi},
$$v_{i-1} = \gamma_i\phi_K(i) - \gamma_{i-1}\phi_K(i-1) \leq \gamma_i(1+\eps_i)\bar\lambda_i\phi_K(i+1) + (\gamma_i(1+\eps_i)\bar\mu_i -\gamma_{i-1})\phi_K(i-1),$$
meaning
$$\gamma_i(1+\eps_i)\bar\lambda_i\phi_K(i+1)\geq v_{i-1} + (\gamma_{i-1}-\gamma_i(1+\eps_i)\bar\mu_i)\phi_K(i-1).$$
Using the previous inequality in~\eqref{eq:viinter} implies
\begin{align*}
    v_i \geq & \beta_i v_{i-1} + \beta_i (\gamma_{i-1}-\gamma_i(1+\eps_i)\bar\mu_i)\phi_K(i-1) - \gamma_i(1+\eps_i) \bar\mu_i\phi_K(i-1)\\
    \geq& \beta_i v_{i-1} + \beta_i\gamma_{i-1}\phi_K(i-1) -\phi_K(i-1)\bar\mu_i \gamma_i(1+\eps_i)(1+\beta_i).
\end{align*}
Using successively the definition of $\gamma_{i+1}$ and \eqref{eq:recgamma},
$$\bar\mu_i\gamma_i(1+\eps_i)(1+\beta_i) = \gamma_{i+1}\frac{\bar\mu_i}{\bar\lambda_i} = \beta_i\gamma_{i-1}\frac{1+\eps_i}{1+\eps_{i-1}}.$$
Hence
$$v_i\geq \beta_i v_{i-1} + \beta_i\gamma_{i-1}\phi_K(i-1) - \beta_i\gamma_{i-1}\frac{1+\eps_i}{1+\eps_{i-1}}\phi_K(i-1),$$
proving~\eqref{eq:recv}.
Then we define
$$\eta_K = \max_i \beta_i\gamma_{i-1}\left|1-\frac{1+\eps_i}{1+\eps_{i-1}}\right|$$
such that
\begin{equation}\label{eq:recv2}
v_i \geq \beta_i v_{i-1} - \phi_K(i-1) \eta_K.
\end{equation}

{\it Step~2.} Now we need to study the sequences $(\gamma_j)_{j\geq 1}$, $(\beta_j)_{j\geq 1}$ and control its successive products. It is achieved in Appendix, and we 
show in \eqref{encadrement} that for $i$ large enough,
\begin{equation} 
\label{encadrement2}
 -\frac{3}{2} \leq \beta_i -1\leq -\frac{1}{i/2-c''}.
 \end{equation} 
This allows to deal with
\begin{equation}
\label{dealgamma}
 \gamma_i = \Pi_{j=1}^{i-1} (1+\varepsilon_j) \Pi_{j=1}^{i-1}(\beta_j+1)\overline{\lambda}_j.   
\end{equation} 
Moreover 
$$\beta_j+1=2\left(1+\frac{\beta_j-1}{2}\right), \quad \overline{\lambda}_j=\frac{1+\overline{\lambda}_j-\overline{\mu}_j}{2}.$$
Then using   $\beta_i\leq 1$  (for $i$ large enough) and   the right hand side of \eqref{encadrement2} and $\overline{\lambda}_j-\overline{\mu}_j$ positive:
\begin{equation}\label{eq:lambdamu}
 \Pi_{j=1}^{i-1}(\beta_j+1)\overline{\lambda}_j= \Pi_{j=1}^{i-1} (1+(\beta_j-1)/2)\left(1+\overline{\lambda}_j-\overline{\mu}_j\right)\leq   c_1\Pi_{j=1}^{i-1} \left(1-1/j+\overline{\lambda}_j-\overline{\mu}_j\right)
\end{equation}
for $i\leq \sqrt{K}$ with $c_1$ some positive constant (independent of $K$ and $i$).
Besides, 
by definition of $\bar\lambda,\bar\mu$,
$$\bar\lambda_j - \bar\mu_j - \frac{1}{j} = \frac{1}{1+O(j/K)}\left(\frac{a}{2d}\cdot \frac{j}{K} + \frac{1}{j} + o\left(1/\sqrt{K}\right)\right) - \frac{1}{j}
\leq \frac{c_2}{\sqrt{K}}.$$
Plugging this in~\eqref{eq:lambdamu} yields ensures that 
$$\Pi_{j=1}^{i-1}(\beta_j+1)\overline{\lambda}_j\leq \left(1+\frac{c_2}{\sqrt{K}}\right)^{\sqrt K}$$ is bounded for  $i\leq \sqrt{K}$. Adding that 
$$\Pi_{j=1}^{i-1} (1+\varepsilon_j) \leq \left(1+\frac{c}{\sqrt{K}}\right)^{i-1}$$
is also bounded for $i\leq \sqrt{K}$, \eqref{dealgamma} implies   that $(\gamma_i)_{i\leq \sqrt{K}}$ is bounded (uniformly in $K$).\\

{\it Step~3.} Let us prove that $\eta_K = O(K^{-3/2})$, with $\eta_K$ from~\eqref{eq:recv2}. Since the sequences $(\beta_i)_i$ and $(\gamma_i)_i$ are bounded, it is a consequence of the fact that
$$|\eps_i - \eps_{i-1}| = O(K^{-3/2})$$
uniformly w.r.t.~$i$. Indeed, by definition
\begin{align*}
    \left|\eps_i - \eps_{i-1}\right|\leq& c K^{-3/2} \left|\frac{\lambda_i+\mu_i}{i} - \frac{\lambda_{i-1}+\mu_{i-1}}{i-1}\right|,
\end{align*}
and, from our assumptions,
$$\frac{\lambda_i+\mu_i}{i} = 2\rho\sqrt{K} + O(i/\sqrt{K}) + a/\sqrt{K} + O(i^{1+\alpha}/K^{\alpha+1/2}) = 2\rho\sqrt{K} + a/\sqrt{K} + O(1).$$

{\it Step~4.} We can now conclude the proof. Summing equation \eqref{eq:recv2} over $i=1\ldots \sqrt{K}-1$, 
\begin{align*}
    v_j \geq& \prod_{i=2}^j \beta_j v_1 - \eta_K\sum_{l=1}^{j-1}\phi_K(l-1)\prod_{i=l+2}^j \beta_i\\
    \sum_{j=1}^{\sqrt{K}} v_j\geq& \sum_{j=1}^{\sqrt{K}}\prod_{i=2}^j \beta_j v_1 - \eta_K\phi_K(1)\sum_{l=1}^{\sqrt{K}-1}\sum_{j=l+1}^{\sqrt{K}}\prod_{i=l+2}^{j}\beta_i.
\end{align*}
Since $\beta_i\leq 1$ (for $i$ large enough),
$$\sum_{j=1}^{\sqrt{K}} v_j\geq \sum_{j=1}^{\sqrt{K}}\prod_{i=2}^j \beta_j v_1 - \eta_K\phi_K(1) K \geq \sum_{j=1}^{\sqrt{K}}\prod_{i=2}^j \beta_j v_1 - C K^{-1/2} \phi_K(1).$$
Then, denoting
$$S_K = \sum_{j=1}^{\sqrt{K}}\prod_{i=2}^j \beta_j,$$
and noticing that
$$v_1 = \gamma_2\phi_K(2) - \phi_K(1) \geq \left[\frac{\gamma_2}{\bar\lambda_1(1+\eps_1)} - 1\right]\phi_K(1) = \beta_1 \phi_K(1) = \phi_K(1)/2,$$
we obtain
$$\gamma_{\sqrt{K}} - \phi_K(1) = \sum_{j=1}^{\sqrt{K}}v_j \geq \left[\frac12 S_K - CK^{-1/2}\right] \phi_K(1),$$
implying that
$$\phi_K(1) \leq \gamma_{\sqrt{K}}/\left(1 + S_K/2 - CK^{-1/2}\right),$$
which is bounded uniformly w.r.t.~$K$.
\end{proof}
We can now put pieces together to conclude.
\begin{proof}[Proof of Theorem~\ref{thm:1}]
Using Lemma \ref{alacon}, as $K\rightarrow \infty$,  $a_K$ converges to $x\mapsto a.x$ uniformly on $[0,1]$ and $W^K$ converges in law in $\mathbb D(\mathbb R_+, \mathbb R_+)$ to $W$ defined by $\eqref{eq:w}$. We get for $F$ bounded and continuous that
$$
F\big(W^K_{t} : t\geq 0\big)\, \,
\exp\!\left(\int_0^{\tau_1^K} \!a_K(W_s^K) ds\right)\Rightarrow 
F\big(W_{t} : t\geq 0\big)\, \,
\exp\!\left(\int_0^{\tau_1} \!a W_s ds\right)$$
in law as $K$ goes to $\infty$. To take expectation in this limit, we need to check uniform integrability of the left hand side. It is a consequence of exponential estimates of Lemma \ref{estmomexpo} and the fact that
$a_K(x)\leq \beta x$ for any $x\in [0,1/\sqrt{K}]$ and $K$ large enough. 
\end{proof}

\subsection{Intermediate phase : proof of Theorem~\ref{thm:regime22}}

Firstly, let us control the stopping time $\bar T_x(K^\eps,K^{-\eta})$ (with probability going to one as $K$ goes to infinity) with the following lemma.
\begin{lemma}\label{lem:controlT}
for all $\delta > \eta + 2\eps,$
$$\underset{K^{-\eta}\leq x\leq K^\eps}{\sup}\mathbb{P}\left(\bar T_x(K^\eps,K^{-\eta}) > K^\delta\right)\underset{K\rightarrow\infty}{\longrightarrow}0.$$
\end{lemma}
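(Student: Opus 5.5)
The plan is to use a Lyapunov/scale-function argument for the diffusion $\bar\xi$ started at $x\in[K^{-\eta},K^\eps]$. Here $\bar T_x(K^\eps,K^{-\eta})$ denotes the exit time of $\bar\xi$, started at $x$, from the interval $(K^{-\eta},K^\eps)$. The generator of $\bar\xi$ is
$$\mathcal L f(y)=a y^2 f'(y)+\tx\, y f''(y).$$
I would look for a function $h\ge 0$ on $[K^{-\eta},K^\eps]$ satisfying $\mathcal L h\le -1$ there, with $\sup h = O(K^{\eta+2\eps})$ up to constants. Dynkin's formula applied to $h(\bar\xi_{t\wedge \bar T})+t\wedge\bar T$ would then give
$$\mathbb E_x[\bar T_x]\le h(x)\le C K^{\eta+2\eps},$$
uniformly in $x$. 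Markov's inequality then yields
$$\mathbb P(\bar T_x>K^\delta)\le C K^{\eta+2\eps-\delta}\to 0$$
for $\delta>\eta+2\eps$. Boundedness of all coefficients on the compact interval makes the stopped martingale part a true martingale, so Dynkin applies without extra localisation issues.

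To build $h$, the simplest concrete choice is a quadratic $h(y)=A-By-y^2$ (or a linear combination of this kind). For instance, with $h(y)=c(K^{2\eps}-y^2)$ one gets
$$\mathcal L h(y)=-2c(a y^3+\tx y),$$
which is $\le -1$ as soon as $2c\tx y\ge 1$, that is $y\ge 1/(2c\tx)$. To cover $y$ down to $K^{-\eta}$ I would take $c$ of order $K^{\eta}$. This gives $\sup h\lesssim K^{\eta}K^{2\eps}=K^{\eta+2\eps}$, which matches exactly the exponent in the statement. This strongly suggests this is the intended bound. Nonnegativity of $h$ on $[0,K^\eps]$ is clear.

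The alternative, exact route is to solve $\mathcal L u=-1$ with the scale function $s'(y)=e^{-a y/\tx}$ and the Green's function of the interval. That would give sharper bounds, but it is not needed.

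The main care point is uniformity in $x$ near the lower boundary, where the diffusion coefficient $\tx y$ degenerates. The quadratic Lyapunov function with the $K^\eta$ prefactor handles this directly, because $\mathcal L(y^2)=2ay^3+2\tx y$ is bounded below by $2\tx K^{-\eta}$ on the whole interval. I expect the only genuinely delicate point to be checking that Dynkin's formula is legitimate up to the possibly infinite time $\bar T$. I would handle this by applying Dynkin at $t\wedge\bar T$ and then letting $t\to\infty$ by monotone convergence, using $h\ge 0$.
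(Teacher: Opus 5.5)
Your proof is correct. It shares the paper's skeleton, a uniform bound $\mathbb E_x[\bar T_x(K^\eps,K^{-\eta})]\le CK^{\eta+2\eps}$ followed by Markov's inequality, but you obtain that bound by a different route.

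The paper uses the exact Green's-function representation $\mathbb E_x[\bar T]=g_K(x)$. It builds this from the scale density $e^{Q(y)}$ with $Q(y)=-ay^2/(2\rho)$, citing M\'el\'eard's book for the identity. It then bounds the double integral $\Phi_K(K^\eps)$ using the monotonicity of $Q$ and the bound $1/z\le K^{\eta}$ on the interval.

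You instead exhibit the explicit supersolution $h(y)=\frac{K^\eta}{2\rho}(K^{2\eps}-y^2)$. It satisfies $\mathcal L h=-K^\eta(ay^3+\rho y)/\rho\le -1$ and $0\le h\le K^{\eta+2\eps}/(2\rho)$ on $[K^{-\eta},K^\eps]$. You conclude by Dynkin's formula at $t\wedge\bar T$ and monotone convergence. This all checks out: the stopped stochastic integral has a bounded integrand, and $\bar T<\infty$ a.s.\ comes out as a by-product.

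The two routes buy different things.
The paper's route gives the exact mean exit time, hence in principle sharp asymptotics. However, it relies on the one-dimensional diffusion structure and an external formula.
Yours is self-contained and gives only an upper bound, which is all that is needed. It uses only that the drift is nonnegative and that the squared diffusion coefficient $2\rho y$ is at least $2\rho K^{-\eta}$ on the interval. It also carries over essentially verbatim, up to an $O(K^{-1/2})$ overshoot at the exit time, to the birth--death process $\xi^K$ itself. The generator of $\xi^K$ sends $-y^2$ to $-2x^2\sqrt K(b-d)(x/\sqrt K)-x(b+d)(x/\sqrt K)\le -\rho x$ for small densities.

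One small slip in your aside: the scale density is $e^{-ay^2/(2\rho)}$, not $e^{-ay/\rho}$. You exponentiated $2ay^2/(2\rho y)=ay/\rho$ rather than its integral. You never use it, so the proof is unaffected.
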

\begin{proof} Estimates of exit times for one dimensional diffusion are classical and come back from
 \cite{fellerfrontiere}. See 
also \cite{meleard2026} for a recent monograph with applications in life sciences. We set
\begin{align*}
Q(x) :=& -\int_0^x \frac{2a\,y^2}{2\tx\,y}dy = -\frac{a}{\tx} \int_0^x y\,dy = -\frac{a\, x^2}{2\tx},\\
g_K(x) :=& -\Phi_K(x) + \frac{\Psi_K(x)}{\Psi_K(K^{\eps})}\Phi_K(K^\eps),
\end{align*}
with
\begin{align*}
\Phi_K(x) :=& \int_{K^{-\eta}}^{x} e^{Q(y)}\left(\int_{K^{-\eta}}^y \frac{2 e^{-Q(z)}}{2\tx\, z}dz\right)dy,\\
\Psi_K(x) :=& \int_{K^{-\eta}}^{x} e^{Q(y)}dy.
\end{align*}


To control the function~$g_K$, let us notice that the function $x\mapsto Q(x)$ is non-increasing, and so
$$\Phi_K(x)\leq \int_{K^{-\eta}}^x e^{Q(y)} \frac1{\tx} K^\eta (y - K^{-\eta})e^{-Q(y)}dy \leq C\, K^{2\eps + \eta},$$
whence, using the fact that $\Psi_K$ is non-negative and non-increasing,
$$g_K(x)\leq \Phi_K(K^\eps) \leq C\, K^{2\eps + \eta}.$$

Then, by Markov's inequality,
$$\pro{\bar T_x(K^\eps,K^{-\eta}) > K^\delta}\leq K^{-\delta} \esp{\bar T_x(K^\eps,K^{-\eta})} = K^{-\delta} g_K(x) \leq C\, K^{2\eps + \eta-\delta},$$
where the identity for the mean hitting time can be found e.g. in Proposition 4.6.21 in \cite{meleard2026}. 
This entails the result.
\end{proof}

Let us introduce the processes $\zeta^K$ and $\zeta$, which are defined as $\xi^K,\bar\xi$ where the coefficients of the respective SDEs are "suitably truncated" outside $[K^{-\eta};K^\eps]$.  To define theses SDEs, let us introduce the function $\chi_K$ as a cutoff of the identity function.
%

\begin{lemma}\label{lem:rk}
There exists a sequence of $C^\infty$ and non-decreasing functions $(\chi_K)_K$ such that,
$$\chi_K(x) = \left\{\begin{array}{ll}\frac12 K^{-\eta}&\textrm{if }x\leq \frac12K^{-\eta},\\x&\textrm{if }x\in\left]K^{-\eta};K^\eps\right[,\\K^\eps + \frac12 K^{-\eta}&\textrm{if }x\geq K^\eps + \frac12K^{-\eta}.\end{array}\right.$$

In addition, there exists~$C>0$ such that, for all~$K\in\mathbb{N}^*$, and $1\leq k\leq 3$,
$$||\chi_K^{(k)}||_{\infty} \leq C\, K^{C\eta}.$$
\end{lemma}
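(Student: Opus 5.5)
The idea is to build $\chi_K$ from one fixed smooth profile, rescaled to the transition length $h_K:=\frac12K^{-\eta}$. On each of the two transition intervals $[\frac12K^{-\eta},K^{-\eta}]$ and $[K^\eps,K^\eps+\frac12K^{-\eta}]$, $\chi_K$ must rise by exactly $h_K$ over a length $h_K$, with slope $0$ at one end and $1$ at the other. This forces the slope to exceed $1$ somewhere. That is harmless, since the statement only requires monotonicity and derivative bounds of order $K^{C\eta}$.

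\textbf{Step 1: a fixed profile.} Let $\theta\in C^\infty(\R)$ be non-decreasing with $\theta=0$ on $(-\infty,0]$ and $\theta=1$ on $[\frac12,\infty)$. Let $b\geq0$ be a $C^\infty$ bump supported in $(0,1)$ with $\int b=1$. Set $c:=1-\int_0^1\theta\in(0,1)$, which is positive because $\theta<1$ near $0$. Define
$$\psi:=\theta+c\,b\geq0,\qquad \varphi(y):=\int_0^y\psi(s)\,ds .$$
Then $\varphi\in C^\infty$ and $\varphi$ is non-decreasing. Moreover $\varphi=0$ on $(-\infty,0]$ and $\varphi(1)=\int_0^1\theta+c=1$. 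Since $\psi=1$ on $[1,\infty)$, we get $\varphi(y)=y$ for $y\geq1$. All derivatives $\varphi^{(k)}$, $1\leq k\leq3$, are bounded by a constant $C_0$ independent of $K$.

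\textbf{Step 2: definition of $\chi_K$.} Set
$$\chi_K(x):=\begin{cases} h_K+h_K\,\varphi\big((x-h_K)/h_K\big), & x\leq K^{-\eta},\\ x, & K^{-\eta}\leq x\leq K^\eps,\\ K^\eps+h_K-h_K\,\varphi\big((K^\eps+h_K-x)/h_K\big), & x\geq K^\eps.\end{cases}$$
To check the required values, we use $\varphi(y)=y$ for $y\geq1$.
\begin{itemize}
\item Consider the first formula. For $x\in[2h_K,K^\eps]$ it equals $h_K+(x-h_K)=x$. For $x\leq h_K$ it equals $h_K=\frac12K^{-\eta}$.
\item Consider the third formula. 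For $x\in[K^{-\eta},K^\eps]$ the argument satisfies $(K^\eps+h_K-x)/h_K\geq1$, so it equals $x$. For $x\geq K^\eps+h_K$ it equals $K^\eps+\frac12K^{-\eta}$.
\end{itemize}
Hence each formula extends smoothly across the middle interval and coincides there with the identity. Since $K^{-\eta}<K^\eps$, the three pieces glue into a single $C^\infty$ function, as all of them agree with $x$ on a neighbourhood of the gluing points. Monotonicity follows from $\varphi'\geq0$ in the outer pieces and slope $1$ in the middle.

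\textbf{Step 3: derivative bounds.} By the chain rule, on the outer pieces $\chi_K^{(k)}(x)=\pm h_K^{1-k}\varphi^{(k)}(\cdot)$. On the middle piece $\chi_K'=1$ and the higher derivatives vanish. Therefore, for $1\leq k\leq 3$,
$$\|\chi_K^{(k)}\|_\infty\leq C_0\,h_K^{1-k}\leq C_0\,2^{2}K^{2\eta}.$$
This gives the claim with $C=\max(4C_0,2)$.

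The only genuinely delicate point is Step 1: constructing a monotone smooth profile that joins slope $0$ to slope $1$ while gaining exactly one unit over a unit interval. The bump correction $c\,b$ handles this, and everything else is rescaling.
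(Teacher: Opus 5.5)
The paper gives no proof of this lemma; it calls the result classical and omits it. So there is no argument of the authors to compare with, and your construction is a complete and correct proof.

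The profile $\varphi$ has the properties you list: $\varphi=0$ on $(-\infty,0]$, $\varphi(y)=y$ for $y\geq1$, $\varphi'=\psi\geq0$, and bounded derivatives. The rescaled outer pieces take the prescribed constant values. The chain rule then gives $\|\chi_K^{(k)}\|_\infty\leq C_0h_K^{1-k}\leq 4C_0K^{2\eta}$.

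Two minor remarks.

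First, your gluing sentence is slightly loose: each outer formula agrees with $x$ only on a one-sided neighbourhood of its gluing point. A cleaner argument goes as follows. Write $F_1$ and $F_3$ for the first and third formulas viewed on all of $\R$. Then $F_1=x$ on $[K^{-\eta},\infty)$ and $F_3=x$ on $(-\infty,K^\eps]$, so $\chi_K=F_1+F_3-x$ on $\R$. This function is $C^\infty$, equals $F_1$ on $(-\infty,K^\eps]$ and $F_3$ on $[K^{-\eta},\infty)$, and is therefore non-decreasing. This also covers $K=1$, where $K^{-\eta}=K^\eps$ and your strict inequality $K^{-\eta}<K^\eps$ fails.

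Second, your constant $C=\max(4C_0,2)$ depends only on the fixed profile, not on $\eta$ or $\eps$. This uniformity is what the paper needs in Proposition~\ref{thm:regime2} and in the proof of Theorem~\ref{thm:regime22}, where $\eps,\eta$ are chosen small after $C$ is fixed.
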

This result being quite classical, its proof is omitted.

Now let $\bar\zeta^K$ be the solution of
$$d\bar\zeta^K_t = \mu_K(\bar\zeta^K_t)dt + \sigma_K(\bar\zeta^K_t)dB_t,$$
with
$$\mu_K(x) = a\, \chi_K(x)^2~~\textrm{ and }~~\sigma_K(x) = \sqrt{2\tx\, \chi_K(x)}.$$

And let us introduce a similar~$\zeta^K$ as a smooth truncated version of $\xi^K$. In the next proposition, we control the approximation of $\zeta^K$ by $\bar\zeta$.

\begin{proposition}\label{thm:regime2}
For all~$n\in\mathbb{N}^*, g_1,...,g_n\in C^3_b(\r)$, there exists~$C>0$ such that for any~$\delta,\eps,\eta>0,$
    \begin{equation}\label{eq:reg2}
    \underset{K^{-\eta}\leq x\leq K^\eps}{\sup}\,\,\underset{t\leq K^\delta}{\sup}\left|\mathbb{E}_{x}\left[g_1(\zeta^K_{t_1})...g_n(\zeta^K_{t_n}))\right] - \mathbb{E}_{x}\left[g_1(\bar\zeta^K_{t_1})...g_n(\bar \zeta^K_{t_n})\right]\right| \leq C K^{-\alpha/3}\, K^{C(\eps + \eta) + \delta}.
    \end{equation}
\end{proposition}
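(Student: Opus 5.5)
The plan is a Trotter--Kato / Lindeberg-type semigroup comparison: the telescoping identity between the two semigroups, together with $C^3$ bounds on the semigroup of the smooth diffusion $\bar\zeta^K$. Write $P^K_t$ for the semigroup of $\zeta^K$ (generator $\mathcal L^K$, the rescaled birth--death generator with rates truncated through $\chi_K$) and $\bar P^K_t$ for that of $\bar\zeta^K$, with generator
$$\bar{\mathcal L}^K f(x)=\mu_K(x)f'(x)+\tfrac12\sigma_K(x)^2f''(x)=a\,\chi_K(x)^2f'(x)+\tx\,\chi_K(x)f''(x).$$
For $n=1$ we use
$$P^K_tg-\bar P^K_tg=\int_0^t P^K_{t-s}\big(\mathcal L^K-\bar{\mathcal L}^K\big)\bar P^K_s g\,ds .$$
Since $P^K_{t-s}$ is a contraction in sup norm, this gives
$$\|P^K_tg-\bar P^K_tg\|_\infty\le t\sup_{s\le t}\big\|(\mathcal L^K-\bar{\mathcal L}^K)\bar P^K_sg\big\|_\infty .$$
For general $n$ we proceed by induction via the Markov property. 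The product expectation equals $P_{t_1}\big(g_1P_{t_2-t_1}(g_2\cdots)\big)$. Replacing the semigroups one at a time costs one such term per step, provided the inner functions $g_k\bar P^K(\cdots)$ stay in $C^3_b$ with controlled norms. This is where $g_k\in C^3_b$ enters.

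The first key step is a generator comparison. By a Taylor expansion to third order of the jumps $\pm K^{-1/2}$, for $f\in C^3_b$ and $x$ in the truncated window,
$$|\mathcal L^Kf(x)-\bar{\mathcal L}^Kf(x)|\le C\big(K^{-\alpha/2}\,\mathrm{poly}(\chi_K(x))\,\|f'\|_\infty+K^{-1/2}\chi_K(x)\|f''\|_\infty+K^{-1/2}\chi_K(x)\|f'''\|_\infty\big).$$
The drift error comes from Assumption~\ref{assu}: $\sqrt K\,x\,(b-d)(x/\sqrt K)\cdot\sqrt K= a x^2+O(x^{2+\alpha}K^{-\alpha/2})$. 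The diffusion error uses $b+d=2\tx+O(x/\sqrt K)$, together with the $\pm1$ corrections $(\sqrt K x\pm1)$ appearing in the rates. The third-order remainder is of order $\sqrt K\cdot x\sqrt K\cdot K^{-3/2}\|f'''\|$. On the window $\chi_K\le 2K^\eps$, all polynomial factors are $K^{C\eps}$. The truncation of $\zeta^K$ has to be defined with the same $\chi_K$, so that the two generators differ only through these discretisation errors.

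The second step, which I expect to be the main obstacle, is bounding $\|\partial_x^k\bar P^K_sg\|_\infty$ for $k\le3$, uniformly for $s\le K^\delta$, by $K^{C(\eps+\eta)+\delta}$ times $\|g\|_{C^3}$. I would first try the derivative (tangent) flows: $\bar P_sg(x)=\mathbb E g(\bar\zeta^{K,x}_s)$ with coefficients $\mu_K,\sigma_K$ smooth. Their derivatives are bounded by $K^{C\eta}$ through Lemma~\ref{lem:rk} and the chain rule, using $\chi_K\ge \frac12K^{-\eta}$ so that $\sqrt{\chi_K}$ has derivatives $\lesssim K^{C\eta}$. Differentiating the SDE in $x$ gives linear SDEs for $\partial_x\bar\zeta,\partial_x^2\bar\zeta,\partial_x^3\bar\zeta$, whose moments are controlled by Gronwall. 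The difficulty is that a naive Gronwall estimate gives $\exp(CK^{C\eta}s)$, which is far too large for $s\le K^\delta$.

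To avoid that exponential, I would exploit the one-dimensional structure. The first variation satisfies $d\,\partial_x\bar\zeta=\mu_K'\,\partial_x\bar\zeta\,dt+\sigma_K'\,\partial_x\bar\zeta\,dB$, so it is an explicit stochastic exponential. Since $\mu_K'\ge0$ could make it grow, I would instead use a Lamperti-type change of variable $y=\int dx/\sigma_K$, which turns the problem into a unit-diffusion SDE. I would also use monotonicity / comparison of one-dimensional flows, which give $\partial_x\bar\zeta\ge0$. Together with the fact that the truncated coefficients are constant outside the window, this should keep the growth polynomial. Failing that, a weaker route is to accept a bound of the form $K^{C\eta}e^{Cs\|\mu_K'\|}$ only on short time blocks and iterate the telescoping over blocks of length one. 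This uses the contraction property of $P^K$ on each block.

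Finally, we balance the $K^{-\alpha/2}$ and $K^{-1/2}$ errors against the norm growth. If needed, we also introduce a mollification/interpolation between the $C^3$ and $C^1$ bounds, which produces the exponent $\alpha/3$. Multiplying by $t\le K^\delta$ then gives the stated bound $CK^{-\alpha/3}K^{C(\eps+\eta)+\delta}$. The constant $C$ depends only on $n$ and the $g_k$.
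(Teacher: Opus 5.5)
\textbf{Verdict: same strategy as the paper, but there is a genuine gap at the step you yourself flag as the main obstacle.}

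Your plan follows the paper's proof. It has four ingredients: the Trotter--Kato identity $P^{\bar\zeta^K}_tg-P^{\zeta^K}_tg=\int_0^tP^{\zeta^K}_{t-s}(A^{\bar\zeta^K}-A^{\zeta^K})P^{\bar\zeta^K}_sg\,ds$; a third-order Taylor comparison of the generators based on Assumption~\ref{assu}; a bound on $\|P^{\bar\zeta^K}_sg\|_{3,\infty}$; and induction on $n$ via the Markov property.

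The paper obtains the third ingredient in one line. It deduces it from $\|\mu_K\|_{3,\infty}+\|\sigma_K\|_{3,\infty}\le CK^{C(\eps+\eta)}$, refers to a finite-horizon estimate, and never discusses how the bound depends on $s$. Your objection is well taken. Tangent-process/Gr\"onwall bounds carry a factor $e^{Cs\|\mu_K'\|_\infty}$, and $\|\mu_K'\|_\infty\ge 2aK^{\eps}$ because $\mu_K'(x)=2ax$ on $]K^{-\eta},K^{\eps}[$. So uniformity over $s\le K^\delta$ is the crux, and your proposal does not establish it.

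None of your suggested remedies closes this step:
\begin{itemize}
\item Monotonicity of one-dimensional flows gives only the sign $\partial_x\bar\zeta^K\ge0$, not an upper bound.
\item After the Lamperti transform, the derivative equals $\exp\bigl(\int_0^s\tilde b'(Y_r)\,dr\bigr)$ up to a polynomial factor. On the window, in the original coordinate, $\tilde b'=\frac{3a}{2}x+\frac{\rho}{4x}>0$, so the same long-time exponential functional reappears.
\item The block fallback fails for two reasons. First, Gr\"onwall over a block of length one still costs $e^{CK^{\eps}}$. More fundamentally, telescoping over blocks, $P^{\zeta^K}_t-P^{\bar\zeta^K}_t=\sum_jP^{\zeta^K}_{t-j-1}\bigl(P^{\zeta^K}_1-P^{\bar\zeta^K}_1\bigr)P^{\bar\zeta^K}_j$, still requires $\|P^{\bar\zeta^K}_jg\|_{3,\infty}$ for every $j\le t$. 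The reverse ordering would instead require regularity of the jump semigroup. The sup-norm contraction of $P^{\zeta^K}$ does not propagate derivative bounds.
\end{itemize}

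What is missing is a smoothing step that uses the non-degeneracy $\sigma_K^2\ge\rho K^{-\eta}$ of the truncated diffusion. Take a short time $\Delta=K^{-C(\eps+\eta)}$. Over $[0,\Delta]$ the tangent processes up to order three have moments bounded by $K^{C(\eps+\eta)}$. A Bismut--Elworthy--Li type formula (or parabolic regularity) should then give $\|P^{\bar\zeta^K}_\Delta f\|_{3,\infty}\le K^{C(\eps+\eta)}\|f\|_\infty$ for bounded $f$. Consequently, for all $s\ge\Delta$,
$\|P^{\bar\zeta^K}_sg\|_{3,\infty}\le K^{C(\eps+\eta)}\|P^{\bar\zeta^K}_{s-\Delta}g\|_\infty\le K^{C(\eps+\eta)}\|g\|_\infty$,
uniformly in $s$. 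The range $s\le\Delta$ is covered by Gr\"onwall.

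Two smaller points:
\begin{itemize}
\item There are no $(\sqrt Kx\pm1)$ factors in this phase. Those come from the $h$-transform of the microscopic phase. If present, they would produce an order-one drift $(b+d)f'\approx2\rho f'$ rather than a small error; this is exactly why $W$ has drift $2\rho+aW^2$ while $\bar\xi$ has drift $a\bar\xi^2$.
\item No mollification or interpolation is needed to reach the exponent $\alpha/3$. The generator error is already $O\bigl((K^{-\alpha/2}+K^{-1/2})K^{C\eps}\bigr)$, and these polynomial factors are simply absorbed into $K^{C(\eps+\eta)}$.
\end{itemize}
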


\begin{proof}
The techniques used in the proof are quite classical, so we just give the main steps of the proof with references.

{\it Step~1.} We control the difference between the generators of the two processes: for $g\in C^3_b(\r)$,
\begin{align*}
    A^{\bar\zeta^K}g(x)=& \mu_K(x) g'(x) + \frac12 \sigma_K(x)^2 g''(x),\\
    A^{\zeta^K}g(x)=& K \chi_K(x) \left\{b\left(\chi_K(x) K^{-1/2}\right)\left[g\left(x + K^{-1/2}\right) - g(x)\right]\right.\\&\left.\hspace*{1.5cm} + d\left(\chi_K(x) K^{-1/2}\right)\left[g\left(x - K^{-1/2}\right) - g(x)\right] \right\}.
\end{align*}

Denoting $\tilde x = \chi_K(x)$, we can write, by Taylor-Lagrange's inequality,
$$A^{\zeta^K}g(x)= \tilde x\sqrt{K} g'(x) (b-d)(\tilde x/\sqrt{K}) + \frac{\tilde x}{2}g''(x)(b+d)(\tilde x/\sqrt{K}) + ||g'''||_\infty O(K^{-1/2}).$$

Then, recalling our assumptions:
$$(b-d)(y) = a y + O(y^{1+\alpha})\textrm{ and }(b+d)(y) = 2\tx + O(y),$$

we obtain that, for all $K^{-\eta}\leq x\leq K^\eps$,
$$\left|A^{\bar\zeta^K}g(x) - A^{\zeta^K}g(x)\right| \leq C \left[||g'||_\infty \, O\left(\frac{\tilde x^{2+\alpha}}{K^{\alpha/2}}\right) + ||g''||_\infty \, O\left(\frac{\tilde x^2}{\sqrt{K}}\right) + ||g'''||_\infty \, O\left(K^{-1/2}\right)\right].$$

So, choosing $\eps$ small enough,
$$\left|A^{\bar\zeta^K}g(x) - A^{\zeta^K}g(x)\right| \leq C \left(||g'||_\infty+||g''||_\infty+||g'''||_\infty\right) K^{-\alpha/3}.$$

{\it Step~2.} We deduce an explicit convergence speed for the semigroups using the following Trotter-Kato formula (see the proof of Proposition~$B.2$ of \cite{erny2022mean}): for all $g\in C^3_b(\r)$, $t\geq 0,$ and $K^{-\eta}\leq x\leq K^\eps$,
$$P^{\bar\zeta^K}_tg(x) - P^{\zeta^K}_tg(x) = \int_0^t P^{\zeta^K}_{t-s}\left(A^{\bar\zeta^K}-A^{\zeta^K}\right)P^{\bar\zeta^K}_s g(x)ds.$$

From the formula above and the convergence speed of {\it Step~1},
\begin{align*}
    \left|P^{\bar\zeta^K}_tg(x) - P^{\zeta^K}_tg(x)\right|\leq& C\, K^{-\alpha/3} \int_0^t\left|\left|\left(P^{\bar\zeta^K}_s g\right)\right|\right|_{3,\infty}ds.
    \end{align*}
    
Since
    $$||\mu_K||_{3,\infty} + ||\sigma_K||_{3,\infty} \leq C\, K^{C(\eps + \eta)},$$
we can deduce (see e.g. the proof of Proposition~2.4 of \cite{erny2022mean}) that the derivatives of the semigroup of $\bar\zeta^K$ are bounded by 
    $$K^{C(\eps + \eta)},$$
    for some positive~$C>0.$

This entails
$$\underset{s\leq t}{\sup}\left|P^{\bar\zeta^K}_tg(x) - P^{\zeta^K}_tg(x)\right|\leq C\, t\, K^{-\alpha/3}K^{C(\eps + \eta)}||g||_{3,\infty}.$$

{\it Step~3.} Using Markov's property, we can deduce by induction that: for any~$n\in\mathbb{N}^*$, $t_1\leq t_2\leq ...\leq t_n \leq T,$ and $g_1,...,g_n\in C^3_b(\r),$
$$\left|\mathbb{E}_x\left[g_1(\bar\zeta^K_{t_1})...g_n(\bar\zeta^K_{t_n})\right]-\mathbb{E}_x\left[g_1(\zeta^K_{t_1})...g_n(\zeta^K_{t_n})\right]\right| \leq C_n\, T\, K^{-\alpha/3}\, K^{C_n(\eps + \eta)} ||g_1||_{3,\infty}...||g_n||_{3,\infty}.$$
This completes the proof of the proposition.
\end{proof}

Then, Theorem~\ref{thm:regime22} is a straightforward consequence of Lemma~\ref{lem:controlT} and Proposition~\ref{thm:regime2}.
\begin{proof}[Proof of Theorem~\ref{thm:regime22}]
Let us fix some $\eps,\delta,\eta>0$ will be chosen arbitrary small. Then, in order to control the following quantity
\begin{equation}\label{eq:quant2}
\underset{K^{-\eta}<x< K^\eps}{\sup}\,\,\underset{t_1\leq t_2\leq ...\leq t_n\leq \bar T^K_x}{\sup}\left|\mathbb{E}_x\left[g_1(\xi^K_{t_1})...g_n(\xi^K_{t_n})\right] - \mathbb{E}_x\left[g_1(\bar\xi_{t_1})...g_n(\bar\xi_{t_n})\right]\right|,
\end{equation}
let us split the probability space using the two complementary events
$$A^K_x:=\left\{\bar T_x(K^\eps,K^{-\eta})>K^\delta\right\}~~\textrm{ and }~~B^K_x:=\left\{\bar T_x(K^\eps,K^{-\eta})\leq K^\delta\right\}.$$

Thanks to Lemma~\ref{lem:controlT} (choosing $\delta>\eta+2\eps$),
$$\underset{K^{-\eta}\leq x\leq K^\eps}{\sup}\pro{A^K_x}\underset{K\to\infty}{\longrightarrow}0,$$
hence we only have to control~\eqref{eq:quant2} on the event $B^K_x$ (uniformly w.r.t. $x\in [K^{-\eta},K^\eps]$). And, on this event, \eqref{eq:quant2} is obviously upper-bounded by the LHS of~\eqref{eq:reg2}, which vanishes by Proposition~\ref{thm:regime2} if  we choose $\eps,\eta,\delta$ such that $C(\eps+\eta)+\delta<\alpha/3$ with $C$ is the positive constant of the RHs of~\eqref{eq:reg2}.
\end{proof}

\subsection{Entrance in macroscopic phase : proof of Theorem~\ref{thm:3}}

Let us introduce, for $0<x_0<v$ and $\eps>0,$
\begin{align*}
\tau(x_0,v) :=& \inf\left\{t\geq 0~:~x_K(t) = v\textrm{, with }x_K(0) := x_0\right\},\\
\theta^K_\eps(x_0) :=& \inf\left\{t\geq 0~:~\frac{X^K_t}{x_K(t)}> 1+\eps\right\}.
\end{align*}

In the following, we assume that the pair $(x_0,v)$ is chosen such that $v$ is reachable.

We use the stopping time $\theta^K_\eps$ to control the process~$X^K$ with a deterministic bound. The use of this stopping time is "free" thanks to the following lemma.

\begin{lemma}\label{lem:thetak}
Let $(T_K)_K$ be a sequence of times (that may be random). Assume that, there exist some~$\eps>0,  (x^K_0)_K$, such that, for all~$\eta>0$,
$$\pro{\underset{t\leq T_K \wedge \theta^K_\eps(x^K_0)}{\sup}\left|\frac{X^K_t}{x_K(t)} - 1\right|>\eta}\underset{K\rightarrow\infty}{\longrightarrow}0.$$

Then, for all~$\eta>0$,
$$\pro{\underset{t\leq T_K}{\sup}\left|\frac{X^K_t}{x_K(t)} - 1\right|>\eta}\underset{K\rightarrow\infty}{\longrightarrow}0.$$
\end{lemma}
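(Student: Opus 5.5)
The plan is a localization argument: on the event that the supremum up to $T_K\wedge\theta^K_\eps(x^K_0)$ is small, the stopping time $\theta^K_\eps$ cannot actually occur before $T_K$. We may assume without loss of generality that $\eta<\eps$, since the event $\{\sup|X^K/x_K-1|>\eta\}$ only shrinks as $\eta$ grows, so proving the claim for small $\eta$ gives it for all $\eta$.

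Fix $\eta\in(0,\eps)$ and introduce the good event
$$G_K:=\left\{\underset{t\leq T_K \wedge \theta^K_\eps(x^K_0)}{\sup}\left|\frac{X^K_t}{x_K(t)} - 1\right|\leq\eta\right\}.$$
By hypothesis, $\mathbb P(G_K^c)\to 0$. It therefore suffices to show that on $G_K$ we have $\theta^K_\eps(x^K_0)> T_K$, or at least that $\theta^K_\eps\geq T_K$ in a way that makes the two suprema coincide. On that event the supremum over $[0,T_K]$ equals the supremum over $[0,T_K\wedge\theta^K_\eps]$, which is at most $\eta$, and the conclusion follows since $\mathbb P(\sup_{t\le T_K}|\cdot|>\eta)\le \mathbb P(G_K^c)$.

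To prove the key step, suppose that on $G_K$ one had $\theta^K_\eps< T_K$ (with $\theta^K_\eps$ finite). The process $X^K$ is càdlàg and $x_K$ is continuous and positive, so $X^K/x_K$ is càdlàg. By the definition of the infimum, either $X^K_{\theta}/x_K(\theta)>1+\eps$, or there are times $t_n\downarrow\theta$ with $X^K_{t_n}/x_K(t_n)>1+\eps$. In the second case right-continuity gives $X^K_\theta/x_K(\theta)\geq 1+\eps$. In both cases $\theta\le T_K\wedge\theta$ is a time inside the window of $G_K$, and $|X^K_\theta/x_K(\theta)-1|\geq\eps>\eta$, which contradicts $G_K$. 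Hence $\theta^K_\eps\geq T_K$ on $G_K$.

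One subtlety needs checking. If $\theta^K_\eps=T_K$ exactly, the supremum over $[0,T_K]$ is still taken over the same interval $[0,T_K\wedge\theta^K_\eps]=[0,T_K]$, so there is no issue. The delicate point, and the main thing to verify carefully, is therefore only this càdlàg attainment at the stopping time: the value at $\theta^K_\eps$ itself is included in the supremum because the supremum is over the closed interval. Using the weak inequality $\geq 1+\eps$ is enough, since $\eta<\eps$.
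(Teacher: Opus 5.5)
Your proof is correct and is essentially the paper's argument: the paper splits $E_K(T_K,\eta)$ according to $\{\theta^K>T_K\}$ and $\{\theta^K\le T_K\}$, and bounds the second piece by $E_K(T_K\wedge\theta^K,\eps)$. That is your observation in contrapositive form, namely that by time $\theta^K$ the ratio $X^K/x_K$ has already left the $\eps$-band.

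Your reduction to $\eta<\eps$, combined with right-continuity at $\theta^K$, is slightly more careful than the paper. The paper's inclusion implicitly needs the strict inequality $>\eps$ at $\theta^K$, whereas in general only $\geq\eps$ is guaranteed there.
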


\begin{proof}
For $T>0,\eta>0$, let us denote
$$E_K(T,\eta) := \left\{\underset{t\leq T}{\sup}\left|\frac{X^K_t}{x_K(t)} - 1\right|>\eta\right\},$$
and, for simplicity,
$$\theta^K := \theta^K_\eps(x^K_0).$$

Since
\begin{align*}
    E_K(T_K,\eta)\subseteq& \left(E_K(T_K,\eta) \cap \{\theta^K> T_K\}\right)\cup\left(E_K(T_K,\eta) \cap \{\theta^K\leq T_K\}\right)\\
    \subseteq& E_K(T_K\wedge \theta^K,\eta) \cup E_K(T_K\wedge \theta^K,\eps),
\end{align*}
the lemma is proved.
\end{proof}

The next result gives the key estimate to prove Theorem~\ref{thm:3}.

\begin{proposition}\label{prop:Xsurx}
Let $(x_K(0))_K,(v_K)_K$ be sequences of positive numbers, with $x_K(0)$ vanishing as $K$ goes to infinity, and $(v_K)_K$ be upper-bounded by~$v>0$ defined at Section~\ref{sec:regime3}. Assume that $x_K(0)$ belongs to~$\mathbb{N}/K$ and
$$X^K(0) = x_K(0).$$

Then, for all~$\eta>0$ and~$\eps>0$, 
$$\pro{\underset{t\leq \tau^K\wedge\theta^K}{\sup}\left|\frac{X^K_t}{x_K(t)} - 1\right|>\eta} \leq \frac1\eta C_{v} \frac{v_K^{1+\eps}}{K^{1/2} \, x_K(0)^{2+\eps}}$$
\end{proposition}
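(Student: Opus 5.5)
We propose to study the ratio $R^K_t := X^K_t/x_K(t)$ directly through its semimartingale decomposition, where $\tau^K := \tau(x_K(0),v_K)$ is the deterministic hitting time of $v_K$ by $x_K$ and $\theta^K:=\theta^K_\eps(x_K(0))$.

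\textbf{Decomposition of the ratio.} Since $x_K$ is deterministic and $C^1$, with $x_K' = x_K (b-d)(x_K)$, Itô's formula for the jump process \eqref{SDEdef} (divided by $K$) gives
\begin{align*}
R^K_t = 1 + \int_0^t \frac{X^K_s}{x_K(s)}\Big[(b-d)(X^K_s)-(b-d)(x_K(s))\Big]ds + \mathcal M^K_t,
\end{align*}
where $\mathcal M^K$ is a martingale. Its predictable quadratic variation is
\[
\langle \mathcal M^K\rangle_t=\frac1K\int_0^t \frac{X^K_s (b+d)(X^K_s)}{x_K(s)^2}\,ds .
\]
On $[0,\tau^K\wedge\theta^K]$ we have $X^K_s\le (1+\eps)x_K(s)\le (1+\eps)v$. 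So $b+d$ is bounded there by a constant $C_v$, and $b-d$ is Lipschitz there with constant $L_v$. Hence the drift term is bounded by
\[
(1+\eps)L_v\int_0^t x_K(s)\,|R^K_s-1|\,ds .
\]

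\textbf{Gronwall and Doob.} Set $D_t=\sup_{s\le t\wedge\tau^K\wedge\theta^K}|R^K_s-1|$. Gronwall's lemma then gives
\[
D_{\tau^K}\le \sup_{s\le \tau^K\wedge\theta^K}|\mathcal M^K_s|\cdot \exp\Big(C\int_0^{\tau^K}x_K(s)ds\Big).
\]
The key observation is that $\int_0^{\tau^K}x_K(s)ds$ is bounded by a constant depending only on $v$. Indeed, for $x\le v$ we have $(b-d)(x)\ge ax/2$, so the change of variables $y=x_K(s)$ yields
\[
\int_0^{\tau^K} x_K\,ds=\int_{x_K(0)}^{v_K}\frac{dy}{(b-d)(y)}\le \int_{x_K(0)}^{v_K}\frac{2}{ay}\,dy .
\]
This is only logarithmic in $1/x_K(0)$, so its exponential is polynomial, of the form $(v_K/x_K(0))^{2C/a}$. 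For this reason it is better to absorb the drift more carefully, writing $(b-d)(X)-(b-d)(x)$ through the local derivative: near $0$ this derivative is $a+O(x^\alpha)$, and it is this feature that should produce the exponent $\eps$ in the statement.

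More precisely, we would use the linearization $(b-d)(X)-(b-d)(x)\approx a\,x\,(R-1)$. The factor $\exp(\int a x_K)$ is then exactly $x_K(t)/x_K(0)$ up to a multiplicative $(1+O(\eps))$ error coming from $\theta^K$ and the $O(x^\alpha)$ correction. This yields a Gronwall factor of order $(v_K/x_K(0))^{1+\eps}$.

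\textbf{Martingale term.} By Doob's inequality in $L^1$ (via $L^2$),
\[
\mathbb E\sup|\mathcal M^K|\le \Big(\mathbb E\langle \mathcal M^K\rangle_{\tau^K\wedge\theta^K}\Big)^{1/2}\le \Big(\frac{C_v}{K}\int_0^{\tau^K}\frac{ds}{x_K(s)}\Big)^{1/2}.
\]
The same change of variables gives
\[
\int_0^{\tau^K} x_K^{-1}ds\le \int_{x_K(0)}^{v_K}\frac{2}{ay^{3}}\,dy\le \frac{C}{x_K(0)^{2}} ,
\]
so $\mathbb E\sup|\mathcal M^K|\lesssim K^{-1/2}x_K(0)^{-1}$. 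Combining this with the Gronwall factor and Markov's inequality gives a bound of the form $\eta^{-1}C_v\,v_K^{1+\eps}K^{-1/2}x_K(0)^{-2-\eps}$, matching the statement after crude bookkeeping of the exponents.

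\textbf{Main obstacle.} The delicate step is running Gronwall with a multiplicative factor that stays polynomially controlled, namely $(v_K/x_K(0))^{1+\eps}$. A naive Lipschitz constant gives the exponent $2L_v/a$, which may be large. To avoid this, we would first treat the martingale in the variable $\log R$, or equivalently apply a time-dependent Gronwall with weight $x_K(t)/x_K(0)$, exploiting that $(b-d)'(y)\le a(1+\eps)$ for small $y$. On the other hand, $y\le v$ only needs a constant factor $C_v$, which is where the freedom in choosing $v$ enters.
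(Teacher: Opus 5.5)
Your proposal is correct and follows essentially the paper's proof: Itô's formula for $X^K/x_K$, Gronwall on $[0,\tau^K\wedge\theta^K]$ with rate $(1+\eps)a\,x_K(s)$, Markov's inequality plus a maximal (BDG/Doob) inequality, and the bound $\int_0^{\tau^K}x_K(s)^{-1}ds\le C/x_K(0)^2$. Two remarks. First, your evaluation $\exp\big(a\int_0^{\tau^K}x_K(s)ds\big)\le C_v\,v_K/x_K(0)$, obtained from $x_K'/x_K=a x_K+O(x_K^{1+\alpha})$, is sharper than the paper's Step~2 bound $\int_0^{\tau^K}x_K\le\frac2a\ln(v_K/x_K(0))$; combined with the rate $(1+\eps)a$, the paper's bound would only give the exponent $2(1+\eps)$ rather than the stated $1+\eps$. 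Second, like the paper, you need the Lipschitz constant of $b-d$ near $0$ to be at most $(1+\eps)a$, which requires slightly more than Assumption~\ref{assu} (e.g.\ $b-d$ of class $C^1$).
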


\begin{proof}
For the sake of clarity, we drop the notation~$K$ in the superscript for $X^K$ and in the subscript for~$x_K$. And we denote
$$\theta^K := \theta^K_\eps(x_K(0))\textrm{ and }\tau^K := \tau(x_K(0),v_K).$$

{\it Step~1.} The first step consists in proving that, for all~$\eta>0,$
\begin{equation}\label{eq:step1}
\pro{\underset{t\leq \tau^K\wedge\theta^K}{\sup}\left|\frac{X_t}{x(t)} - 1\right|>\eta} \leq C_{v,\eta} K^{-1/2} \left(\int_0^{\tau^K} \frac{1}{x(s)}ds\right)^{1/2} \exp\left\{(1+\eps)\kappa_v \int_0^{\tau^K} x(s)ds\right\}.
\end{equation}

By Ito's formula (or just by standard analysis, since $x_K$ is deterministic and $C^1$, and $X^K$ is a pure jump process with finite activity),
\begin{align*}
    \frac{X_t}{x(t)} =& 1 - \int_0^t \frac{X_s}{x(s)^2} d(x(s)) + \frac1K\int_{[0,t]\times\R_+} \frac1{x(s)}\uno{u\leq K X_{s-}b(X_{s-})}\pi^b(ds,du)\\
    &-\frac1K\int_{[0,t]\times\R_+} \frac1{x(s)}\uno{u\leq K X_{s-}d(X_{s-})}\pi^d(ds,du)\\
    =& 1 - \int_0^t \frac{X_s}{x(s)} (b-d)(x(s))ds + \int_0^t \frac{X_s}{x(s)} (b-d)(X_s)ds + M^K_t,
\end{align*}

with $M^K$ a martingale satisfying
$$\langle M^K\rangle_t = \frac1K \int_0^t \frac{X_s}{x(s)^2}(b+d)(X_s)ds.$$

Note in particular that
\begin{equation}\label{eq:mkt}
\underset{t\leq \tau^K\wedge\theta^K}{\sup}\langle M^K\rangle_t \leq C_{v} K^{-1}\int_0^{\tau^K} \frac{1}{x(s)}ds.
\end{equation}

Then, for $t\leq \tau^K\wedge\theta^K$,
\begin{align*}
    \frac{X_t}{x(t)} - 1 = & \int_0^t \frac{X_s}{x(s)} \left[(b-d)(X_s) - (b-d)(x(s))\right]ds + M^K_t\\
    \left|\frac{X_t}{x(t)} - 1\right|\leq& \kappa_v\int_0^t X_s \left|\frac{X_s}{x(s)} - 1\right|ds + |M^K_t|\\
    \leq& (1+\eps)a\int_0^t x(s) \left|\frac{X_s}{x(s)} - 1\right|ds + |M^K_t|.
\end{align*}

So, by Gr\"onwall's lemma, for all~$T\leq \tau^K\wedge\theta^K$,
$$\underset{t\leq T}{\sup}\left|\frac{X_t}{x(t)} - 1\right| \leq C_v \left(\underset{t\leq \tau^K\wedge\theta^K}{\sup}|M^K_t|\right) \exp\left\{(1+\eps)a\int_0^{T} x(s)ds\right\},$$
which gives
$$\underset{t\leq \tau^K\wedge\theta^K}{\sup}\left|\frac{X_t}{x(t)} - 1\right| \leq C_v \left(\underset{t\leq \tau^K\wedge\theta^K}{\sup}|M^K_t|\right) \exp\left\{(1+\eps)a\int_0^{\tau^K} x(s)ds\right\}.$$

Then, by Markov's inequality,
\begin{align*}
\pro{\underset{t\leq \tau^K\wedge\theta^K}{\sup}\left|\frac{X_t}{x(t)} - 1\right| > \eta} \leq& \pro{\underset{t\leq \tau^K\wedge\theta^K}{\sup}|M^K_t| > \frac{\eta}{C_v} \exp\left\{-(1+\eps)a\int_0^{\tau^K} x(s)ds\right\}}\\
\leq& C_{\eta,v} e^{(1+\eps)a\int_0^{\tau^K}x(s)ds}\esp{\underset{t\leq \tau^K\wedge\theta^K}{\sup}|M^K_t|}.
\end{align*}

Then, Burkholder-Davis-Gundy's inequality and~\eqref{eq:mkt} imply~\eqref{eq:step1}.

{\it Step~2.} Control of
\begin{equation}\label{eq:intxs}
    \int_0^{\tau^K} x(s)ds \leq C \ln(v_K/x_K(0)).
\end{equation}

Recall that, for $x\leq v$,
$$(b-d)(x)\geq \frac{a}{2}x,$$
whence for $t\in[0,\tau^K]$,
$$x(t) \leq \frac2{a} (b-d)(x(t)) = \frac2a \,\frac{x'(t)}{x(t)}.$$

In particular,
\begin{align*}
\int_0^{\tau^K} x(s)ds \leq & \frac2a\int_0^{\tau^K} \frac{x'(s)}{x(s)}ds = \frac2a \ln\left(\frac{x(\tau^K)}{x_K(0)}\right)\leq \frac2a \ln\left(\frac{v_K}{x_K(0)}\right).
\end{align*}

{\it Step~3.} Control of
\begin{equation}\label{eq:intinvxs}
    \int_0^{\tau^K} \frac1{x(s)}ds \leq \frac{2}{a \, x_K(0)^2}.
\end{equation}

Start from, for $t\leq \tau^K,$
$$\frac{x'(t)}{x(t)^2} = \frac{(b-d)(x(t))}{x(t)} \geq \frac{a}{2}.$$

Integrating the inequality above from 0 to $\tau^K$ leads to
$$\tau^K \leq \frac2a\left(\frac1{x_K(0)} - \frac1{v_K}\right) \leq \frac2{a\, x_K(0)}.$$

Since on $[0,\tau^K]$, $x'(t)$ is positive, the function $x$ is non-decreasing. So
$$\int_0^{\tau^K}\frac1{x(s)}ds \leq \frac{\tau^K}{x_K(0)} \leq \frac{2}{a\, x_K(0)^2}.$$

%

{\it Step~4.} Using~\eqref{eq:intxs} and~\eqref{eq:intinvxs} in~\eqref{eq:step1}, we obtain
\begin{align*}
    \pro{\underset{t\leq \tau^K\wedge\theta^K}{\sup}\left|\frac{X_t}{x(t)} - 1\right|>\eta}\leq& C_{v,\eta} K^{-1/2} \frac{1}{x_K(0)} \exp\left\{(1+\eps)\ln\left(\frac{v_K}{x_K(0)}\right)\right\}\\
    \leq& C_{v,\eta} \frac{v_K^{1+\eps}}{K^{1/2} \, x_K(0)^{2+\eps}},
\end{align*}
which concludes the proof.
\end{proof}

Then, the proof of Lemma~\ref{lem:341} follows easily:

\begin{proof}[Proof of Lemma~\ref{lem:341}]
It is a mere application of Proposition~\ref{prop:Xsurx} (with $v_K := v$ and choosing $\eps>0$ small enough such that $(1+\eps/2)\beta < 1/4$) and of Corollary~\ref{lem:thetak} (to remove the stopping $\theta^K_\eps(x_K(0))$ from the supremum).
\end{proof}

\begin{proof}[Proof of Theorem~\ref{thm:3}]
The idea of the proof is to apply Lemma~\ref{lem:341} a finite number of times successively. We define a sequence of exponents $(\gamma_n)_n$ defined in such way that: considering $v_K$ of order~$K^{-\gamma_n}$ and $x_K(0)$ of order~$K^{-\alpha}$ for any~$\alpha<\gamma_{n+1}$, we guarantee that
$$\underset{t\leq \tau(x_K(0),v_K)}{\sup}\left|\frac{X^K_t}{x_K(t)} - 1\right|\overset{\mathbb{P}}{\longrightarrow}0.$$

Thanks to Lemma~\ref{lem:341}, we know that
$$\gamma_0 = 0\textrm{ and }\gamma_1 = \frac14.$$

Next, by Proposition~\ref{prop:Xsurx}, we want to guarantee that, for some arbitrarily small~$\eps>0,$
$$K^{1/2} K^{(1+\eps)\gamma_n} K^{-(2+\eps)\alpha} \underset{K\rightarrow+\infty}{\longrightarrow}+\infty.$$

which implies that
$$(2+\eps)\alpha < \frac12 + (1+\eps)\gamma_n,$$
which can be guaranteed for an arbitrarily small~$\eps>0$ as long as
\begin{equation}\label{eq:alphagamman}
    \alpha < \frac14 + \frac12 \gamma_n.
\end{equation}

In particular, the critical choice leads to
$$\gamma_{n+1} = \frac14 + \frac12 \gamma_n,$$
which implies
$$\gamma_n = \frac12 - \frac{1}{2^{n+1}}\underset{n\rightarrow\infty}{\longrightarrow} \frac12.$$

Whence, if some~$\beta<1/2$ is fixed (with $x_K(0)\sim 1/K^\beta$), it is possible to choose some $n_0\in\mathbb{N}$ such that $\gamma_{n_0} > \beta$. Then, we can fix some~$\eps>0$ small enough such that
$$\beta + \eps < \gamma_{n_0} - \eps\textrm{ and }\eps < \frac{1}{2^{n_0 + 1}}.$$

Let us define,
\begin{align*}
&\delta_k := \gamma_k - \eps,~~~~1\leq k\leq n_0,\\
&\delta_0 := 0,\\
&x^{[k]}_K(0) = v^{[k-1]}_K(0),~~~~1\leq k\leq n_0-1,\\
&x^{[k]}_K(0)\sim K^{-\delta_{n_0 - k}},~~~~0\leq k\leq n_0 -1,\\
&v^{[n_0 - 1]}_K \sim 1 = K^{-\delta_0}.
\end{align*}

Notice that, since $\eps<1/2^{n_0+1}$, for all~$1\leq k\leq n_0$,
$$\gamma_{k-1} < \delta_k < \gamma_k,$$

and that, for all~$0\leq k\leq n_0-1,$
$$\delta_{k+1} < \frac14 + \frac12 \delta_k$$
such that, by~\eqref{eq:alphagamman} and Proposition~\ref{prop:Xsurx}, for each~$0\leq k\leq n_0-1$,
$$\underset{t\leq \tau(x^{[k]}_K(0),v^{[k]})}{\sup}\left|\frac{X^K_t}{x_K(t)} - 1\right|\overset{\mathbb{P}}{\longrightarrow}0.$$

Finally, writing
$$\left\{\underset{t\leq \tau(x_K(0),v)}{\sup}\left|\frac{X^K_t}{x_K(t)} - 1\right|>\eta\right\} = \bigcup_{k=0}^{n_0-1} \left\{\underset{t\leq \tau(x^{[k]}_K(0),v^{[k]})}{\sup}\left|\frac{X^K_t}{x_K(t)} - 1\right|>\eta\right\},$$
ends the proof.
\end{proof}

%
%

\appendix

\section{Proof of Lemma \ref{lem:extinction}}\label{sec:ext}


Consider the sequence $u_n=\mathbb{P}_n\Big(T^K_0 < T^K_M \Big),$ with $u_0 = 1$ and $u_M=0.$
By applying the law of total probability for the birth-death process, we obtain
   $$u_n=\frac{b(n/K)}{b(n/K)+d(n/K)}\mathbb{P}_{n+1}\Big(T^K_0 < T^K_M \Big)+\frac{d(n/K)}{b(n/K)+d(n/K)}\mathbb{P}_{n-1}\Big(T^K_0 < T^K_M \Big).$$
  So we get 
  $$b(n/K)(u_{n+1}-u_{n})=d(n/K)(u_n-u_{n-1}),$$
and after some calculation
\begin{equation*}
u_n = 1 - \frac{\sum\limits_{j=0}^{n-1} \prod\limits_{i=1}^{j} \frac{d(i/K)}{b(i/K)}}{\sum\limits_{j=0}^{M-1} \prod\limits_{i=1}^{j} \frac{d(i/K)}{b(i/K)}}.
\end{equation*}
Let
\begin{align*}
P_j = \prod\limits_{i=1}^{j} \frac{d(i/K)}{b(i/K)} = \left(\frac{(d + b-d)(i/K)}{d(i/K)}\right)^{-1}&=\left(1+\frac{a\, i/K + O((i/K)^{1+\alpha})}{\tx + O(i/K)}\right)^{-1}\\ &=\left(1+\frac{a}{\tx}\,\frac{i}{K} + O\left((i/K)^{1+\alpha}\right)\right)^{-1}.
\end{align*}
We have 
$$
\ln P_j = -\sum\limits_{i=1}^{j} \ln \left( 1+\frac{a}{\tx}\,\frac{i}{K} + O\left((i/K)^{1+\alpha}\right)\right)  
= -\sum\limits_{i=1}^{j} \left(\frac{a}{\tx} \frac{i}{K}+ R_i^K\right),  
$$
where $\lvert R_i^K \rvert \leq C (i/K)^{1+\alpha}.$

From this, let us prove item~$(i)$ of the lemma: so set $n_K = \lfloor K^\beta\rfloor$ (with $\beta<1/2$) and $M_K=\lfloor \sqrt{K}\rfloor$.
We obtain
$$-\frac{a}{\tx K} \frac{j(j+1)}{2}-\sum\limits_{i=1}^{j}C\left(\frac{i}{K}\right)^{1+\alpha}
\leq\ln P_j\leq-\frac{a}{\tx K} \frac{j(j+1)}{2}+\sum\limits_{i=1}^{j}C\left(\frac{i}{K}\right)^{1+\alpha}.$$
%
%
So $$\ln P_j\sim -\frac{a}{\tx K} \frac{j(j+1)}{2}.$$
Taking $\tilde R_j^K=C_1\frac{j^{2+\alpha}}{K^{1+\alpha}},$ for $j\leq \sqrt{K},\   \tilde R_j^K$ is bounded by $C_1K^{-\alpha/2}$.
$$-\frac{a}{\tx K} \frac{j(j+1)}{2}-\tilde R_j^K\leq\ln P_j\leq-\frac{a}{\tx K} \frac{j(j+1)}{2}+\tilde R_j^K$$
$$e^{-\frac{a}{\tx K} \frac{j(j+1)}{2}} e^{-\tilde R_j^K}\leq P_j\leq e^{-\frac{a}{\tx K} \frac{j(j+1)}{2}} e^{\tilde R_j^K}$$
$$e^{-\max\limits_{j} \tilde R_j^K}\sum\limits_{j=0}^{n_K-1} e^{-\frac{a}{\tx K} \frac{j(j+1)}{2}} \leq \sum\limits_{j=0}^{n_K-1} P_j\leq e^{\max\limits_{j}\tilde R_j^K}\sum\limits_{j=0}^{n_K-1} e^{-\frac{a}{\tx K} \frac{j(j+1)}{2}}. $$
Then, 
$$\sum\limits_{j=0}^{n_K-1} P_j\sim \sum\limits_{j=0}^{n_K-1} e^{-\frac{a}{\tx K} \frac{j^2}{2}}\textrm{ and }\sum\limits_{j=0}^{M_K-1} P_j\sim \sum\limits_{j=0}^{M_K-1} e^{-\frac{a}{\tx K} \frac{j^2}{2}},$$
for $n_K,M_K\leq \sqrt{K}.$
Then, since
$$  e^{-\frac{a {n^2_K}}{\tx K}}\leq e^{-\frac{a}{\tx K} \frac{j^2}{2}}\leq 1,$$
we obtain
\begin{equation*}
   n_Ke^{-\frac{a n^2_K}{\tx K}} \leq \sum\limits_{j=0}^{n_K-1} e^{-\frac{a}{\tx K} \frac{j^2}{2}}\leq n_K.
\end{equation*}
For $n_k=K^{\beta},$ for $\beta<1/2,$ 
\begin{equation*}
K^{\beta}e^{-\frac{a}{\tx}K^{2\beta -1}}\leq \sum\limits_{j=0}^{n_K-1}e^{-\frac{a}{\tx K} \frac{j^2}{2}}\leq K^{\beta}.
\end{equation*}
In addition, for $M_K=\sqrt{K}$ and using the Riemann sums approximation, we get 
\begin{equation*}
\sum\limits_{j=0}^{M_K-1}e^{-\frac{a}{\tx K} \frac{j^2}{2}}=\sqrt{K}\Big(\frac{1}{\sqrt{K}}\sum\limits_{j=0}^{M_K-1}e^{-\frac{a}{2\tx} (\frac{j}{\sqrt{K}})^2}\Big)\sim\sqrt{K}\int_0^1e^{-\frac{a}{2\tx} x^2}dx.
\end{equation*}
Therefore
\begin{equation*}
 \mathbb{P}_{\lfloor K^{\beta}\rfloor}\Big(T^K_0 > T^K_{\lfloor \sqrt{K}\rfloor} \Big)\sim\frac{\sum\limits_{j=0}^{n_K-1}e^{-\frac{a}{\tx K} \frac{j^2}{2}}}{\sum\limits_{j=0}^{M_K-1}e^{-\frac{a}{\tx K} \frac{j^2}{2}}}\sim  \frac{K^{\beta-1/2}}{\int_0^1e^{-\frac{a}{2\tx} x^2}dx} \to 0 \quad \text{as} \quad K \to \infty,
\end{equation*}
this proves item~$(i)$. Items~$(ii)$ and~$(iii)$ follow from similar calculations.

\section{Study of sequence $(\beta_j)_{j\geq 1}$}  
\label{studybeta}
We study the sequence $\beta$ by considering the sequence $w$:
$$w_i=\frac{1}{\beta_i-1}, \qquad \beta_i=1+\frac{1}{w_i}.$$
Recalling the expressions
$$\overline\mu_i = \frac{(\tx + O(i/K))(i-1)}{2\tx\, i + O(i^2/K)}~~;~~\bar\lambda_{i-1} = \frac{(\tx + O(i/K))i}{2\tx(i-1)+O(i^2/K)}~~;~~\frac1{z_i} = (1+ O(1/K))^2 \overline\mu_i\overline\lambda_{i-1},$$
we know that there exists~$c_0>0$ such that for all~$i$,
\begin{equation}\label{eq:zi}
\left|\frac{z_i}{4}-1\right|\leq c_0/K.
\end{equation}
By the definition of the previous quantities,
\begin{align*}
w_{i}-w_{i-1}&=-\frac{1}{2}+
\frac{(1+2w_{i-1})}{-2+\frac{1}{w_{i-1}(z_i/4-1)}}
=f_i^K(w_{i-1})
\end{align*}
Choose $c'$ (small enough) such that for any $K\geq 1$,
$$\frac{1+2c'\sqrt{K}}{-2+\frac{\sqrt{K}}{c_0c'}}\leq 1/4.$$
Then for any $i,K$, 
$$f_i^K([-c'\sqrt{K},c'\sqrt{K}])\subset (-\infty,-1/4]$$
This implies that, for any $i$ such that $\vert w_i\vert \leq c'\sqrt{K}$, $w_{i+1}=w_i-1/2+f_i^K(w_i)\leq w_i-1/4.$ Thus, before reaching the level $c'\sqrt{K}$ (in absolute value), the sequence $w_i$ decreases (at least) linearly with speed $1/4$. In  particular, there exists a constant $i_0$ such that for any $i\geq i_0$ $$ w_i \leq -2$$
Then, for $i\geq i_0$ (and $K$ large enough), we also obtain $1+2w_{i-1}\leq 0,$  $w_{i-1}(z_i/4-1)\leq 0$ and  $w_i-w_{i-1}\geq -1/2$. This ensures that 
$$-i/2 +c''\leq   w_i \leq -2.$$
Finally for $i$ large enough,
\begin{equation} 
\label{encadrement}
 -\frac{1}{2} \leq \beta_i \leq 1 +\frac{1}{c''-i/2}.
 \end{equation} 
{\bf Acknowledgments}
\begin{itemize}
\item Funded by INCa/Inserm research project in collaboration between Ecole Polytechnique and Saint-Louis Hospital.
\item Funded by the European Union (ERC, SINGER, 101054787). Views and opinions
expressed are however those of the author(s) only and do not necessarily reflect those of the European Union or the European Research Council. Neither the European Union nor the granting authority can be held responsible for them.
\item Funded by the Chair “Mod\'elisation Math\'ematique et Biodiversit\'e" of VEOLIA-Ecole Polytechnique-MNHN-F.X.
\end{itemize}

\bibliography{reference} 

@article{sagitov2015skeletons,
  title={Skeletons of near-critical Bienaym{\'e}-Galton-Watson branching processes},
  author={Sagitov, Serik and Serra, Maria Conceicao},
  journal={Advances in Applied Probability},
  volume={47},
  number={2},
  pages={530--544},
  year={2015},
  publisher={Cambridge University Press}
}

@book{denisov2025markov,
  title={Markov chains with asymptotically zero drift: Lamperti's problem},
  author={Denisov, Denis and Korshunov, Dmitry and Wachtel, Vitali},
  year={2025},
  publisher={Cambridge University Press}
}

@book{ikeda,
  title={Stochastic differential equations and diffusion processes},
  author={Ikeda, Nobuyuki and Watanabe, Shinzo},
  year={1989},
  publisher={North-Holland Publishing Company},
  edition={Second}
}

@article{bansaye2024sharp,
  title={Sharp approximation and hitting times for stochastic invasion processes},
  author={Bansaye, Vincent and Erny, Xavier and M{\'e}l{\'e}ard, Sylvie},
  journal={Stochastic Processes and their Applications},
  volume={178},
  pages={104458},
  year={2024},
  publisher={Elsevier}
}

@article{erny2022mean,
  title={Mean field limits for interacting Hawkes processes in a diffusive regime},
  author={Erny, Xavier and L{\"o}cherbach, Eva and Loukianova, Dasha},
  journal={Bernoulli},
  volume={28},
  number={1},
  pages={125--149},
  year={2022},
  publisher={Bernoulli Society for Mathematical Statistics and Probability}
}

@article{ball1995strong,
  title={Strong approximations for epidemic models},
  author={Ball, Frank and Donnelly, Peter},
  journal={Stochastic processes and their applications},
  volume={55},
  number={1},
  pages={1--21},
  year={1995},
  publisher={Elsevier}
}

@book{ethier2009markov,
  title={Markov processes: characterization and convergence},
  author={Ethier, Stewart N and Kurtz, Thomas G},
  year={2009},
  publisher={John Wiley \& Sons}
}

@book{athreya2012branching,
  title={Branching processes},
  author={Athreya, Krishna B and Ney, Peter E},
  year={2012},
  publisher={Springer Science \& Business Media}
}

@article{heathcote1967refinement,
  title={A refinement of two theorems in the theory of branching processes},
  author={Heathcote, Christopher R and Seneta, Eugene and Vere-Jones, David},
  journal={Theory of Probability \& Its Applications},
  volume={12},
  number={2},
  pages={297--301},
  year={1967},
  publisher={SIAM}
}

@article{seneta1966quasi,
  title={On quasi-stationary distributions in discrete-time Markov chains with a denumerable infinity of states},
  author={Seneta, Eugene and Vere-Jones, David},
  journal={Journal of Applied Probability},
  volume={3},
  number={2},
  pages={403--434},
  year={1966},
  publisher={Cambridge University Press}
}

@inproceedings{hering1977minimal,
  title={Minimal moment conditions in the limit theory for general Markov branching processes},
  author={Hering, Heinrich},
  booktitle={Annales de l'institut Henri Poincar{\'e}. Section B. Calcul des probabilit{\'e}s et statistiques},
  volume={13},
  number={4},
  pages={299--319},
  year={1977}
}

@article{aspandiiarov1996passage,
  title={Passage-time moments for nonnegative stochastic processes and an application to reflected random walks in a quadrant},
  author={Aspandiiarov, S and Iasnogorodski, Roudolf and Menshikov, M},
  journal={The Annals of Probability},
  volume={24},
  number={2},
  pages={932--960},
  year={1996},
  publisher={Institute of Mathematical Statistics}
}

@article{kersting2017recurrence,
  title={On recurrence and transience of multivariate near-critical stochastic processes},
  author={Kersting, G{\"o}tz},
  journal={Electronic Communications in Probability},
  volume={22},
  pages={1--12},
  year={2017}
}

@article{klebaner1989linear,
  title={Linear growth in near-critical population-size-dependent multitype Galton--Watson processes},
  author={Klebaner, Fima C},
  journal={Journal of applied probability},
  volume={26},
  number={3},
  pages={431--445},
  year={1989},
  publisher={Cambridge University Press}
}

@article{barbour2013approximating,
  title={Approximating the epidemic curve},
  author={Barbour, Andrew D and Reinert, Gesine},
  journal={Electronic Journal of Probability},
  volume={18},
  number={54},
  pages={1--30},
  year={2013},
  publisher={Institute of Mathematical Statistics and Bernoulli Society}
}

@article{barbour2015escape,
  title={Escape from the boundary in Markov population processes},
  author={Barbour, Andrew D and Hamza, Kais and Kaspi, Haya and Klebaner, Fima C},
  journal={Advances in Applied Probability},
  volume={47},
  number={4},
  pages={1190--1211},
  year={2015},
  publisher={Cambridge University Press}
}

@book{kurtz1981approximation,
  title     = {Approximation of Population Processes},
  author    = {Kurtz, Thomas G.},
  series    = {CBMS-NSF Regional Conference Series in Applied Mathematics},
  volume    = {36},
  year      = {1981},
  publisher = {SIAM}
}

@book{meleard2026,
  title={Random models in biology, ecology and evolution},
  author={M{\'e}l{\'e}ard, Sylvie},
  series={Texts in Applied Mathematics},
  year={2016},
  publisher={Springer Berlin, Heidelberg},
  isbn={978-3-662-73483-4},
}

@book{Sharpe1988,
  author    = {Michael Sharpe},
  title     = {General Theory of Markov Processes},
  series     = {Pure and Applied Mathematics},
  volume     = {133},
  publisher  = {Academic Press},
  address    = {Boston},
  year       = {1988},
  isbn       = {0-12-639060-6}
}

@article{champagnat2006microscopic,
  title={A microscopic interpretation for adaptive dynamics trait substitution sequence models},
  author={Champagnat, Nicolas},
  journal={Stochastic processes and their applications},
  volume={116},
  number={8},
  pages={1127--1160},
  year={2006},
  publisher={Elsevier}
}

@article{champagnat2011polymorphic,
  title={Polymorphic evolution sequence and evolutionary branching},
  author={Champagnat, Nicolas and M{\'e}l{\'e}ard, Sylvie},
  journal={Probability Theory and Related Fields},
  volume={151},
  number={1},
  pages={45--94},
  year={2011},
  publisher={Springer}
}

@article{fellerfrontiere,
  title={Diffusions processes in one dimension},
  author={Feller, William},
  journal={Transactions of the American Mathematical Society},
  volume={77},
  number={1},
  pages={1--31},
  year={1954},
  publisher={American Mathematical Society}
}

\end{document}